\def\csname ver@l3regex.sty\endcsname{} 
\newtheorem{theorem}{Theorem}[section]
\newtheorem{lemma}{Lemma}[section]
\newtheorem{definition}{Definition}[section]
\begin{document}

\setcounter{page}{1}
\vspace{2cm}
\author[\hspace{0.7cm}\centerline{TWMS J. App. Eng.
Math. V.xx, N.xx, 20xx}]{S.S. B\.ILG\.IC\.I$^1$, M. \c{S}AN$^2$,\S }
\title[\centerline{S.S. B\.ILG\.IC\.I, M. \c{S}AN: Existence and Uniqueness Results ...
\hspace{0.5cm}}]{Existence and Uniqueness Results 
for a  
nonlinear fractional differential 
equations \\
of order $\sigma\in(1,2)$}

\thanks{\noindent $^1$ Çankırı Karatekin University, Faculty of Sicence, Uluyazı Campus, 18100, Çankırı, Turkey.\\
\indent \,\,\, e-mail: sinan.serkan.bilgici@gmail.com; ORCID: https://orcid.org/0000-0002-4237-6204.\\
\indent $^2$ Çankırı Karatekin University, Faculty of Sicence, Uluyazı Campus, 18100, Çankırı, Turkey.\\
\indent \,\,\, e-mail: mufitsan@karatekin.edu.tr; ORCID: https://orcid.org/0000-0001-6852-1919.\\
\indent \S \, Manuscript received: Month Day, Year; accepted: Month Day, Year. \\
\indent \,\,\,.\\
}

\begin{abstract}
The main objective of this article is to discuss the local existence of the solution to an initial value problem involving a non-linear differential equation in the sense of Riemann-Liouville fractional derivative of order $\sigma\in(1,2),$ when the nonlinear term has a discontinuity at zero. Hereafter, by using some tools of Lebesgue spaces such as Hölder inequality, we  obtain Nagumo-type, Krasnoselskii-Krein-type and Osgood-type uniqueness theorems for the problem. \\ 
\par
\noindent Keywords: Fractional differential equations, Riemann-Liouville derivative,  existence and uniqueness, Hölder inequality, Lebesque spaces. \\

\noindent AMS Subject Classification: 34A08, 37C25, 34A12, 74G20, 26A33. 
 
\end{abstract}
\maketitle 
\bigskip

%
\section{Introduction}
The importance of existence and uniqueness theorems for initial value and boundary value problems (IVP and BVP) involving the classical derivative operator  is indisputable because, without them, one cannot understand modelled systems correctly and make predictions how they behave. Recently, with the popularity of fractional derivative operators such as Riemann-Liouville (R-L), Caputo (C), etc., the equations involving these operators have begun to be studied in detail (See, \cite{Del}-\cite{Yoruk}). However, such a generalization leads to some difficulties and differences. For instance, unlike the initial value problems involving the classical derivative, the existence of continuous solution to some IVPs in the sense of  
R-L derivative strictly depends on the initial values and smoothness conditions on the functions in right-hand side of equations in IVPs. To support this claim, one can refer to \cite{San3}, and one can see there that an initial value problem including a non-linear fractional differential equation of order $\sigma\in (0,1)$ has no continuous solution when the problem has a non-homogeneous initial value and the right-hand side of the equation is continuous on $[0,T]\times\mathbb{R}$. The similar issue arises in our  investigation of the existence and uniqueness of solutions to the following problem
\begin{equation} \label{intvalue}
\begin{cases}
&D^{\sigma}\omega(x) = f\big(x,\omega(x),D^{\sigma-1}\omega(x)\big),\quad x>0   \\  
&\omega(0)=0, \quad D^{\sigma-1}\omega\left(x\right)|_{x=0} =b,    
\end{cases}
\end{equation} 
where $\sigma\in(1,2),$ $b\neq 0,$ $f$ will be specified later and $D^\sigma$ represents the Riemann-Liouville fractional derivative of order $\sigma,$ which is given by
$$D^{\sigma}\omega(x) =\frac{1}{\Gamma(2-\sigma)}\frac{d^{2}}{dx^{2}}\int_{0}^{x}\frac{\omega(t)}{(x-t)^{\sigma-1}}dt.
$$

The equation in \eqref{intvalue} was first considered by Yoruk et. al. \cite{Yoruk}, when the second initial value is also homogenous ($b=0$) and the right-hand side function is continuous on $[0,T]\times \mathbb{R}\times\mathbb{R}.$ They gave Krasnoselskii-Krein, Roger and Kooi-type uniqueness results. Since problem \eqref{intvalue} we consider has a non-homogeneous initial condition, we investigate existence of its solutions in a convenient space of functions under following conditions:

\begin{itemize}
	\item[(C1)]
	\quad Let $f\left(x,t_{1},t_{2}\right)\in C \big(\left(0,T\right]\times\mathbb{R}\times\mathbb{R}\big)$ and $x^{\sigma-1} f\left(x,t_{1},t_{2}\right) \in C\big(\left[0,T\right]\times\mathbb{R}\times\mathbb{R}\big),$ 
\end{itemize}
where $C(X)$ represents the class of continuous functions defined on $X.$  

Moreover, under some appropriate conditions we establish Nagumo-type, Krasnoselskii-Krein-type and Osgood-type uniqueness results for the problem. To prove them, we follow some ways introduced in \cite{Agarwal},\cite{San3},\cite{Yoruk} by generalizing some definitions made there and, in addition to this, we use the tools of Lebesgue spaces such as Hölder inequality. 

\section{Preliminaries} 

We begin with definitions of R-L integral and R-L derivative of higher order. The lower terminal points of integrals in their formulas will be taken as zero.   

\begin{definition}
	
	\label{def1} The Riemann-Liouville integral of order $\sigma>0$ of a function $\omega\left(x\right)$ is defined by
	\begin{equation}
	I^{\sigma}\omega\left(x\right) :=\frac{1}{\Gamma \left(\sigma\right) }%
	\ \int_{0}^{x}\frac{\omega\left( t\right) }{\left( x-t \right)
		^{1-\sigma}}dt 
	\end{equation}
	provided that the integral is pointwise defined on $\mathbb{R}_{0}^{+}.$
\end{definition}

\begin{definition}
	The Riemann-Liouville derivative of order $n-1<\sigma<n$ ($n\in\mathbb{N}$) of a function $\omega\left(x\right)$ is given by
	\begin{equation} \label{def2} 
	D^{\sigma}\omega\left(x\right) =\frac{1}{\Gamma \left( n-\sigma\right) 
	}\frac{d^{n}}{dx^{n}}\int_{0}^{x}\frac{\omega\left(t \right) }{\left(x-t
		\right) ^{\sigma-n+1}}dt,  v
	\end{equation}
	provided that the right side is pointwise defined on $\mathbb{R}_{0}^{+}.$
\end{definition}  

Compositional relations are frequently used in the literature (See, \cite{Kilbas}, \cite{Podlubny}) for converting an initial value or boundary-value problem to a corresponding integral equation. The following two lemmas \cite{Podlubny} are related to that.  

\begin{lemma}\label{lemma}

	For a function $\omega(x)$ such that $D^{\sigma}\omega(x)$ with  $n-1<\sigma<n$ is integrable, the compositional relation    
	\begin{equation} 
	I^{\sigma}D^{\sigma}\omega(x)=\omega(x)+\sum_{m=1}^{n}D^{\sigma-n}\omega(x)|_{x=0} \frac{x^{\sigma-n}}{\Gamma(\sigma-n+1)}
	\end{equation}
	is satisfied, where it is assumed that $D^{\sigma-n}\omega(x)=I^{n-\sigma}\omega(x)$ when $\sigma<n.$

	In case of $\sigma=2,$ the above formula turns into
	\begin{equation}
	I^{\sigma}D^{\sigma}\omega(x)=\omega(x)+D^{\sigma-1}\omega(x)|_{x=0} \frac{x^{\sigma-1}}{\Gamma(\sigma)}+D^{\sigma-2}\omega(x)|_{x=0} \frac{x^{\sigma-2}}{\Gamma(\sigma-1)}. 
	\end{equation}
	
	Moreover, if $\omega$ is continuous on $[0,T],$ then  
	\begin{equation}
	I^{\sigma}D^{\sigma}\omega(x)=\omega(x)+D^{\sigma-1}\omega(0)\frac{x^{\sigma-1}}{\Gamma(\sigma)}
	\end{equation}	 
	holds.  	 
\end{lemma}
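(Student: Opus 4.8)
The plan is to reduce this fractional compositional relation to the classical Newton--Leibniz (Taylor) formula of integer order, exactly in the spirit of \cite{Podlubny}. The starting point is the factorisation $D^{\sigma}=\tfrac{d^{n}}{dx^{n}}\,I^{n-\sigma}$ read off from \eqref{def2}, together with the semigroup law $I^{n-\sigma}I^{\sigma}=I^{n}$ for fractional integrals. First I would set $\phi:=I^{n-\sigma}\omega$, so that $D^{\sigma}\omega=\phi^{(n)}$ and hence $I^{\sigma}D^{\sigma}\omega=I^{\sigma}\phi^{(n)}$. Applying $I^{n-\sigma}$ to both sides and using the semigroup law turns the fractional expression into an ordinary iterated integral of an $n$-th derivative, to which the classical Taylor formula with integral remainder applies:
\[
I^{n-\sigma}\big(I^{\sigma}D^{\sigma}\omega\big)=I^{n}\phi^{(n)}=I^{n}D^{n}\phi=\phi(x)-\sum_{k=0}^{n-1}\frac{\phi^{(k)}(0)}{k!}\,x^{k}.
\]

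Second, to remove the leading operator $I^{n-\sigma}$ I would apply $D^{n-\sigma}$ to both sides, invoking the left-inverse property $D^{n-\sigma}I^{n-\sigma}=\mathrm{id}$, the identity $D^{n-\sigma}\phi=D^{n-\sigma}I^{n-\sigma}\omega=\omega$, and the power rule $D^{n-\sigma}x^{k}=\tfrac{k!}{\Gamma(\sigma-n+k+1)}x^{\sigma-n+k}$. This produces
\[
I^{\sigma}D^{\sigma}\omega(x)=\omega(x)-\sum_{k=0}^{n-1}\frac{\phi^{(k)}(0)}{\Gamma(\sigma-n+k+1)}\,x^{\sigma-n+k}.
\]
The reindexing $m=n-k$ then recasts the sum over $m=1,\dots,n$ with exponents $x^{\sigma-m}$ and factors $\Gamma(\sigma-m+1)$, while the coefficients are identified through the composition rule $\phi^{(n-m)}=D^{n-m}I^{n-\sigma}\omega=D^{\sigma-m}\omega$, interpreting $D^{\sigma-m}=I^{m-\sigma}$ whenever $\sigma<m$, precisely the convention fixed in the statement. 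Evaluating at $x=0$ yields the general compositional relation.

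Third, I would specialise to the range $\sigma\in(1,2)$, that is $n=2$, and simply expand the sum into its two terms $m=1,2$ to obtain the second displayed identity. For the final assertion, assuming $\omega$ continuous on $[0,T]$, I would show that the $m=2$ term disappears: since $[D^{\sigma-2}\omega]_{x=0}=[I^{2-\sigma}\omega]_{x=0}$ and $\omega$ is bounded on the compact interval, the estimate $|I^{2-\sigma}\omega(x)|\le \tfrac{\|\omega\|_{\infty}}{\Gamma(3-\sigma)}\,x^{2-\sigma}\to 0$ as $x\to 0^{+}$ forces this initial value to vanish, leaving only the $\tfrac{x^{\sigma-1}}{\Gamma(\sigma)}$ contribution. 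This is the mechanism by which continuity of $\omega$ suppresses the more singular boundary term.

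The step I expect to be the main obstacle is the rigorous justification of the operator identities under the weak hypothesis that merely $D^{\sigma}\omega$ is integrable, namely the left-inverse relation $D^{n-\sigma}I^{n-\sigma}=\mathrm{id}$ and the composition $D^{n-m}I^{n-\sigma}=D^{\sigma-m}$. Both require controlling the singular kernel $(x-t)^{-(\sigma-n+1)}$ near the lower endpoint and ensuring that $\phi=I^{n-\sigma}\omega$ is regular enough (with $\phi^{(n-1)}$ absolutely continuous) for the Taylor-remainder step to be legitimate. Once this regularity bookkeeping is secured, the remaining manipulations of the Gamma factors and the vanishing of the endpoint term for continuous $\omega$ are entirely routine.
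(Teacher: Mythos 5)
There is no in-paper proof to compare your attempt against: the lemma is quoted from \cite{Podlubny} without proof. Your derivation is in fact the standard proof of that quoted result --- factor $D^{\sigma}=\frac{d^{n}}{dx^{n}}I^{n-\sigma}$, reduce to the integer-order Taylor formula for $\phi=I^{n-\sigma}\omega$ via the semigroup law, then strip $I^{n-\sigma}$ off again with $D^{n-\sigma}$ and the power rule --- so in substance you are reconstructing exactly the argument the paper delegates to the literature, and the regularity bookkeeping you single out at the end (absolute continuity of $\phi^{(n-1)}$, validity of $D^{n-\sigma}I^{n-\sigma}=\mathrm{id}$ under mere integrability of $D^{\sigma}\omega$) is indeed the only non-routine content.

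There is, however, one discrepancy you pass over silently, and it matters. Your computation yields
\begin{equation*}
I^{\sigma}D^{\sigma}\omega(x)=\omega(x)-\sum_{m=1}^{n}\big[D^{\sigma-m}\omega(x)\big]_{x=0}\,\frac{x^{\sigma-m}}{\Gamma(\sigma-m+1)},
\end{equation*}
with a \emph{minus} sign, whereas the lemma as printed asserts a plus sign (besides writing $\sigma-n$ where $\sigma-m$ is meant, and ``$\sigma=2$'' where $n=2$ is meant). These two statements are not equivalent, and it is the printed one that is wrong: for $\omega(x)=x^{\sigma-1}$ with $1<\sigma<2$ one has $I^{2-\sigma}\omega(x)=\Gamma(\sigma)\,x$, hence $D^{\sigma}\omega\equiv0$ and $I^{\sigma}D^{\sigma}\omega\equiv0$, while $D^{\sigma-1}\omega(0)=\Gamma(\sigma)$ and $D^{\sigma-2}\omega(0)=I^{2-\sigma}\omega(0)=0$; the minus-sign formula correctly returns $x^{\sigma-1}-x^{\sigma-1}=0$, the plus-sign one returns $2x^{\sigma-1}\neq0$. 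The minus sign is also what the paper itself needs downstream: only with it does Lemma \ref{lemma1} produce the integral equation \eqref{intequ} with the term $+\frac{b}{\Gamma(\sigma)}x^{\sigma-1}$. So your proof is sound, but it proves the corrected statement; you should say explicitly that the sign (and the summation index) in the printed lemma must be fixed, rather than asserting that your final display ``yields the general compositional relation'' as stated.
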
  

\begin{lemma}\label{lemma1.1}   
	For an $\sigma$ R-L integrable function $\omega(x),$ the well-known rule 
	$$D^{\sigma}I^{\sigma}\omega(x)=D^{2}I^{2-\sigma}I^{\sigma}\omega(x)=D^{2}I^{2}\omega(x)=\omega(x)$$
	holds. 
\end{lemma}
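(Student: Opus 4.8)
The plan is to verify the chain of equalities in the statement one equal sign at a time, since each is a separate claim. Throughout, because $\sigma\in(1,2)$ we take $n=2$, so the defining formula \eqref{def2} of the Riemann--Liouville derivative reads $D^{\sigma}=D^{2}I^{2-\sigma}$, where $D^{2}=d^{2}/dx^{2}$ is the ordinary second derivative. The whole proof then amounts to justifying (i) unwinding this definition, (ii) a semigroup law for fractional integrals, and (iii) the classical fundamental theorem of calculus.

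First I would establish the leftmost equality. Applying the definition of $D^{\sigma}$ to the function $I^{\sigma}\omega$ with $n=2$ gives at once
$$D^{\sigma}I^{\sigma}\omega(x)=D^{2}I^{2-\sigma}\big(I^{\sigma}\omega\big)(x),$$
so this step is merely reading off \eqref{def2}. The only point deferred here is that $I^{2-\sigma}I^{\sigma}\omega$ must actually be twice differentiable, which becomes transparent once the next step is in place.

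The heart of the argument is the middle equality $I^{2-\sigma}I^{\sigma}\omega=I^{2}\omega$, that is, the index law $I^{\alpha}I^{\beta}\omega=I^{\alpha+\beta}\omega$ with $\alpha=2-\sigma$ and $\beta=\sigma$. I would prove it by writing both fractional integrals as iterated integrals and interchanging the order of integration via Fubini's theorem, which is legitimate precisely because $\omega$ is assumed $\sigma$-integrable, so the kernels are absolutely integrable over the triangle $\{0\le s\le t\le x\}$. After the swap the inner integral is a Beta integral: the substitution $t=s+(x-s)u$ converts $\int_{s}^{x}(x-t)^{\alpha-1}(t-s)^{\beta-1}\,dt$ into $(x-s)^{\alpha+\beta-1}B(\beta,\alpha)$, and since $B(\beta,\alpha)=\Gamma(\alpha)\Gamma(\beta)/\Gamma(\alpha+\beta)$ the Gamma factors cancel the prefactor $1/\big(\Gamma(\alpha)\Gamma(\beta)\big)$, leaving exactly $I^{\alpha+\beta}\omega$. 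With $\alpha+\beta=2$ this yields $I^{2}\omega$. I expect this interchange-and-Beta computation to be the main obstacle, since it is where the integrability hypothesis is genuinely invoked and where one must check that the repeated integral is finite before appealing to Fubini.

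Finally, the rightmost equality $D^{2}I^{2}\omega=\omega$ is the classical case of the fundamental theorem of calculus: because $I^{2}\omega(x)=\int_{0}^{x}\int_{0}^{t}\omega(s)\,ds\,dt$ is an ordinary double primitive of $\omega$, differentiating twice recovers $\omega$ (pointwise wherever $\omega$ is continuous, and almost everywhere in general). Chaining the three equalities gives $D^{\sigma}I^{\sigma}\omega=\omega$; as a by-product this confirms the differentiability left open in the first step, since $I^{2}\omega$ is visibly twice differentiable.
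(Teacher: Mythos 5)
Your proof is correct. Note that the paper itself gives no proof of this lemma at all --- it is quoted as a known result from the cited reference (Podlubny) --- and your argument is exactly the standard textbook derivation that reference uses: unwind the definition $D^{\sigma}=D^{2}I^{2-\sigma}$, prove the semigroup identity $I^{2-\sigma}I^{\sigma}\omega=I^{2}\omega$ by Fubini's theorem plus the Beta-integral computation, and finish with the fundamental theorem of calculus, with the correct caveat that $D^{2}I^{2}\omega=\omega$ holds at continuity points of $\omega$ and almost everywhere in general, consistent with the ``pointwise defined'' proviso in the paper's definitions.
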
   

Solutions to problem \eqref{intvalue} will be investigated in the space defined below \cite{Del} :  

\begin{theorem}\label{Th1}  The space of continuous functions defined on $[0,T],$ whose R-L fractional derivative of order $\beta,$ $0<\beta<1$ are continuous on  $[0,T]$ is a Banach space when endowed with the following norm:
	\begin{align*}
	||\omega||_{\beta}=||\omega||_{\infty}+||D^{\beta}\omega||_{\infty},
	\end{align*}
	where $||.||_{\infty}$ is the supremum norm defined on the class of continuous functions. 	This space will be denoted by $C^{\beta}([0,T]).$ 
\end{theorem}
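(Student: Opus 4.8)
The plan is to prove two separate things: first that $||\cdot||_{\beta}$ is genuinely a norm on the underlying vector space, and second — the substantive part — that the space is complete under it. For the norm axioms I would note that since $||\cdot||_{\infty}$ is a norm on $C([0,T])$ and $\omega\mapsto D^{\beta}\omega$ is linear, the functional $\omega\mapsto ||D^{\beta}\omega||_{\infty}$ is a seminorm; adding it to $||\omega||_{\infty}$ preserves absolute homogeneity and the triangle inequality, while positive definiteness is immediate because $||\omega||_{\beta}=0$ already forces $||\omega||_{\infty}=0$ and hence $\omega\equiv 0$. This part is routine.

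For completeness I would begin with an arbitrary Cauchy sequence $\{\omega_n\}$ in $(C^{\beta}([0,T]),||\cdot||_{\beta})$. The very shape of the norm splits the Cauchy condition in two: both $\{\omega_n\}$ and $\{D^{\beta}\omega_n\}$ are Cauchy in $(C([0,T]),||\cdot||_{\infty})$. Since the latter is a Banach space, there exist continuous limits $\omega$ and $\psi$ with $\omega_n\to\omega$ and $D^{\beta}\omega_n\to\psi$ uniformly. Everything then reduces to identifying $\psi$ with $D^{\beta}\omega$, for this is exactly what certifies $\omega\in C^{\beta}([0,T])$ and gives $||\omega_n-\omega||_{\beta}=||\omega_n-\omega||_{\infty}+||\psi_n-\psi||_{\infty}\to 0$.

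To carry out that identification I would first record that $I^{\beta}$ is a \emph{bounded} linear operator on $(C([0,T]),||\cdot||_{\infty})$, with $||I^{\beta}\omega||_{\infty}\le \frac{T^{\beta}}{\Gamma(\beta+1)}\,||\omega||_{\infty}$, a one-line estimate from the integral in Definition \ref{def1}. Applying $I^{\beta}$ to $D^{\beta}\omega_n$ and invoking the compositional relation of Lemma \ref{lemma} (with $n=1$), I would observe that the only boundary term, $I^{1-\beta}\omega_n(x)\big|_{x=0}$, vanishes: for a bounded continuous $\omega_n$ one has $|I^{1-\beta}\omega_n(x)|\le \frac{||\omega_n||_{\infty}}{\Gamma(1-\beta)}\,\frac{x^{1-\beta}}{1-\beta}\to 0$ as $x\to 0^{+}$, which is precisely where the hypothesis $0<\beta<1$ (so that $1-\beta>0$) enters. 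This yields $I^{\beta}D^{\beta}\omega_n=\omega_n$. Passing to the limit, continuity of $I^{\beta}$ gives $I^{\beta}\psi=\omega$, and then applying $D^{\beta}$ together with Lemma \ref{lemma1.1}, which supplies $D^{\beta}I^{\beta}\psi=\psi$, delivers $D^{\beta}\omega=\psi$. Thus $D^{\beta}\omega$ is continuous, $\omega\in C^{\beta}([0,T])$, and convergence in $||\cdot||_{\beta}$ follows.

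The main obstacle is exactly this step $\psi=D^{\beta}\omega$. Uniform convergence of $\{\omega_n\}$ does not transfer through the (unbounded) operator $D^{\beta}$, so one cannot naively differentiate under the limit. The device that circumvents this is to route the argument through the bounded operator $I^{\beta}$, where passing to the limit is legitimate, and only then to recover the derivative identity from the inversion formulas of Lemmas \ref{lemma} and \ref{lemma1.1}. The secondary technical point requiring care is the vanishing of the boundary term in the compositional relation for functions in $C([0,T])$, since it is this cancellation that makes $I^{\beta}$ a genuine left inverse of $D^{\beta}$ on the space at hand.
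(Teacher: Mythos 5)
Your proof is correct, but note that the paper itself offers no proof of this statement: Theorem \ref{Th1} is quoted from the reference \cite{Del} (Delbosco), so there is no in-paper argument to compare against. Your argument is essentially the standard one from that literature: split the Cauchy condition via the two terms of $\|\cdot\|_{\beta}$, obtain uniform limits $\omega$ and $\psi$, and identify $\psi=D^{\beta}\omega$ by routing through the bounded operator $I^{\beta}$ — using that the boundary term $I^{1-\beta}\omega_n(x)\big|_{x=0}$ vanishes for continuous $\omega_n$ (so $I^{\beta}D^{\beta}\omega_n=\omega_n$), passing to the limit, and inverting with $D^{\beta}I^{\beta}\psi=\psi$. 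The only blemishes are cosmetic: the symbol $\psi_n$ should read $D^{\beta}\omega_n$, and since the paper's Lemma \ref{lemma} states the ``continuous $\omega$'' refinement only for $\sigma\in(1,2)$, your explicit verification that the boundary term vanishes for $0<\beta<1$ is exactly what is needed to make the appeal to that lemma legitimate.
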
 

The local existence of solutions  to problem \eqref{intvalue} will be proved with the aid of Schauder fixed point theorem \cite{Zeidler}:  

\begin{theorem}\label{Th2} Let $\mathcal{C}$ be a closed, bounded, convex subset of a Banach space $X:=\big\{u:I\to\mathbb{R} \ \text{continuous}  : I\subset\mathbb{R} \ \text{closed and bounded interval}\big\}.$  If operator $\mathcal{S}:\mathcal{C}\rightarrow \mathcal{C}$ is continuous and, if $\mathcal{S}(\mathcal{C})$ is an equicontinuous set on $I,$  
	then $\mathcal{S}$ has at least one fixed point in $\mathcal{C}.$  
\end{theorem}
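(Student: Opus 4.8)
The plan is to reduce the assertion to the classical Schauder theorem on a compact convex set and then to establish that form by finite-dimensional approximation together with Brouwer's fixed point theorem. The only non-classical ingredient here is that compactness of the relevant set is not assumed directly but is instead encoded in the equicontinuity hypothesis, so the first job is to recover it.

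First I would promote the equicontinuity hypothesis to compactness. Since $\mathcal{C}$ is bounded in the supremum norm, the image $\mathcal{S}(\mathcal{C})\subseteq\mathcal{C}$ is a uniformly bounded family of functions on the compact interval $I$, and by assumption it is equicontinuous; hence the Arzel\`a--Ascoli theorem guarantees that $\overline{\mathcal{S}(\mathcal{C})}$ is compact in $X$. Invoking Mazur's theorem, the closed convex hull $K:=\overline{\operatorname{conv}}\,\mathcal{S}(\mathcal{C})$ is then a nonempty compact convex subset of $X$. Because $\mathcal{C}$ is closed and convex with $\mathcal{S}(\mathcal{C})\subseteq\mathcal{C}$, we have $K\subseteq\mathcal{C}$, so $\mathcal{S}$ restricts to a continuous self-map of $K$ (indeed $\mathcal{S}(K)\subseteq\mathcal{S}(\mathcal{C})\subseteq K$). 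It therefore suffices to find a fixed point of $\mathcal{S}$ on the compact convex set $K$.

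For that I would run the Schauder projection argument. Given $\varepsilon>0$, compactness of $K$ furnishes a finite $\varepsilon$-net $\{y_{1},\dots,y_{n}\}$; setting $\phi_{i}(u)=\max\{0,\varepsilon-\|u-y_{i}\|\}$ and $P_{\varepsilon}(u)=\big(\sum_{i}\phi_{i}(u)\big)^{-1}\sum_{i}\phi_{i}(u)\,y_{i}$ defines a continuous map of $K$ into the finite-dimensional set $K_{\varepsilon}:=\operatorname{conv}\{y_{1},\dots,y_{n}\}$ (the denominator never vanishes, since every $u$ lies within $\varepsilon$ of some $y_{i}$), and it satisfies $\|P_{\varepsilon}(u)-u\|<\varepsilon$ for every $u\in K$. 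The composition $P_{\varepsilon}\circ\mathcal{S}$ is then a continuous self-map of the compact convex subset $K_{\varepsilon}$ of a finite-dimensional space, so Brouwer's theorem yields a fixed point $u_{\varepsilon}\in K_{\varepsilon}$, for which $\|\mathcal{S}(u_{\varepsilon})-u_{\varepsilon}\|=\|\mathcal{S}(u_{\varepsilon})-P_{\varepsilon}(\mathcal{S}(u_{\varepsilon}))\|<\varepsilon$.

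Finally I would pass to the limit. Choosing $\varepsilon=1/k$ produces a sequence $(u_{k})\subseteq K$ with $\|\mathcal{S}(u_{k})-u_{k}\|\to0$; by compactness of $K$ a subsequence $u_{k_{j}}$ converges to some $u^{*}\in K$, and continuity of $\mathcal{S}$ then forces $\mathcal{S}(u^{*})=u^{*}$. I expect the main obstacle to be the finite-dimensional approximation step — verifying that the Schauder projection is well defined and $\varepsilon$-close to the identity, and correctly invoking Brouwer's theorem on $K_{\varepsilon}$ — since the Arzel\`a--Ascoli reduction and the concluding limit argument are comparatively routine once compactness is in hand.
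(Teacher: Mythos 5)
Your proof is correct, but there is nothing in the paper to compare it against: the paper states this result without proof, importing it as a standard form of Schauder's fixed point theorem from \cite{Zeidler}, and then uses it as a black box in the existence theorem. The argument you give --- Arzel\`a--Ascoli to convert boundedness plus equicontinuity into relative compactness of $\mathcal{S}(\mathcal{C})$, Mazur's theorem to obtain the compact convex invariant set $K=\overline{\operatorname{conv}}\,\mathcal{S}(\mathcal{C})\subseteq\mathcal{C}$, the Schauder projection onto the convex hull of a finite $\varepsilon$-net combined with Brouwer's theorem, and the concluding compactness-plus-continuity limit --- is precisely the classical proof of that theorem (essentially the one in the cited reference), and each step as you state it is sound, including the verification that the projection is well defined and $\varepsilon$-close to the identity and that $\mathcal{S}(K)\subseteq K$.
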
  

 \section{Main Results} 

The one of  mathematical tools used for showing the existence and uniqueness of the desired type of solution to a given initial or boundary value problem is first to convert them into an integral equation. One investigates the existence and uniqueness of the solution to the integral equation instead of the associated problem. Here, we follow this way by taking the aid of the lemma given below: 

\begin{lemma} \label{lemma1} Under condition (C1),
	if $\omega \in C^\sigma [0,T]$ is a solution of problem (\ref{intvalue}), then   $\omega\in C^{\sigma-1}[0,T]$ is a solution of the following integral equation 
	\begin{align} 
	\omega(x)&=\frac{b}{\Gamma(\sigma)}x^{\sigma-1}+\frac{1}{\Gamma(\sigma)}\int_{0}^{x}\frac{f(t,\omega(t),D^{\sigma-1}\omega(t))}{(x-t)^{1-\sigma}}dt  \label{intequ}
	\end{align}
	and, vice versa.
\end{lemma}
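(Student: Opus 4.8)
The plan is to prove the equivalence in both directions using the compositional relations from Lemma~\ref{lemma} and Lemma~\ref{lemma1.1}, with the main labor being the careful bookkeeping of which initial terms survive under the continuity hypothesis $\omega\in C^\sigma[0,T]$.

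For the forward direction, I would start from the differential equation $D^\sigma\omega(x)=f(x,\omega(x),D^{\sigma-1}\omega(x))$ and apply the Riemann-Liouville integral operator $I^\sigma$ to both sides. On the left, Lemma~\ref{lemma} (with $n=2$ since $\sigma\in(1,2)$) gives
\begin{equation*}
I^\sigma D^\sigma\omega(x)=\omega(x)+D^{\sigma-1}\omega(x)|_{x=0}\frac{x^{\sigma-1}}{\Gamma(\sigma)}+D^{\sigma-2}\omega(x)|_{x=0}\frac{x^{\sigma-2}}{\Gamma(\sigma-1)}.
\end{equation*}
The first initial condition $\omega(0)=0$ together with the continuity of $\omega$ on $[0,T]$ forces the coefficient of $x^{\sigma-2}$ to vanish: since $D^{\sigma-2}\omega=I^{2-\sigma}\omega$ and $\omega$ is continuous with $\omega(0)=0$, the term $D^{\sigma-2}\omega(x)|_{x=0}=0$, which is exactly the reduction stated in the last formula of Lemma~\ref{lemma}. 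Substituting the second initial condition $D^{\sigma-1}\omega(0)=b$ and replacing $D^\sigma\omega$ by $f$ on the right-hand side yields precisely \eqref{intequ}. I would then record that $\omega\in C^{\sigma-1}[0,T]$, i.e. that $\omega$ and $D^{\sigma-1}\omega$ are continuous; this is where condition (C1) enters, guaranteeing that the integral term defines a continuous function of $x$ even though $f$ itself is only continuous on the open interval $(0,T]$ in its first slot.

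For the converse, I would begin with an $\omega$ satisfying \eqref{intequ} and apply $D^\sigma$ to both sides. Using Lemma~\ref{lemma1.1}, the rule $D^\sigma I^\sigma g=g$ applied to $g(t)=f(t,\omega(t),D^{\sigma-1}\omega(t))$ recovers the right-hand side $f$, while $D^\sigma$ annihilates the power term $\tfrac{b}{\Gamma(\sigma)}x^{\sigma-1}$ because $D^\sigma x^{\sigma-1}=0$ for $\sigma\in(1,2)$ (this is a standard computation: the Riemann-Liouville derivative sends $x^{\sigma-1}$ to a multiple of $x^{-1}$, which the two-fold differentiation kills, or more cleanly $D^{\sigma}x^{\sigma-1}=\Gamma(\sigma)\,x^{-1}/\Gamma(0)=0$). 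This recovers the differential equation. To recover the initial conditions, I would evaluate: setting $x=0$ in \eqref{intequ} gives $\omega(0)=0$ since $x^{\sigma-1}\to 0$ and the integral term vanishes at $x=0$ under (C1); and applying $D^{\sigma-1}$ to \eqref{intequ} and evaluating at $x=0$, using $D^{\sigma-1}x^{\sigma-1}=\Gamma(\sigma)$, isolates the constant $b$, while the derivative of the integral term contributes nothing at the origin.

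The main obstacle I expect is justifying the interchange of the derivative operator with the integral and the vanishing of the boundary terms \emph{rigorously under the weak regularity allowed by (C1)}, where $f$ need not be continuous at $x=0$ but only $x^{\sigma-1}f$ is. Concretely, one must verify that $f(t,\omega(t),D^{\sigma-1}\omega(t))$ is integrable near $t=0$ (it is, by (C1), since it grows no faster than $t^{-(\sigma-1)}$), that the compositional relations of Lemma~\ref{lemma} and Lemma~\ref{lemma1.1} genuinely apply to this integrable function, and that the evaluation $D^{\sigma-2}\omega(0)=0$ holds. I would handle this by estimating $|I^{2-\sigma}\omega(x)|\le \tfrac{\|\omega\|_\infty}{\Gamma(2-\sigma)}\int_0^x(x-t)^{1-\sigma}\,dt=O(x^{2-\sigma})\to 0$, which confirms the missing initial term and makes the whole equivalence tight.
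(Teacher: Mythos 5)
Your proposal is correct and takes essentially the same route as the paper's own proof: apply $I^{\sigma}$ together with the compositional relation of Lemma~\ref{lemma} (with the $x^{\sigma-2}$ term killed by continuity of $\omega$) in the forward direction, apply $D^{\sigma}$ together with Lemma~\ref{lemma1.1} in the converse, and then verify both initial conditions using condition (C1). The only cosmetic difference is that you justify the vanishing limits by direct kernel estimates, namely $|I^{2-\sigma}\omega(x)|=O(x^{2-\sigma})$ and $|f(t,\cdot,\cdot)|\leq Mt^{1-\sigma}$, whereas the paper uses the change of variables $t=x\tau$; the two computations are equivalent.
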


\begin{proof} We assume that $\omega \in C^\sigma [0,T]$ is a solution of problem (\ref{intvalue}). If we apply $I^{\sigma}$ to both sides of the equation in the problem and, if we consider  
	$$I^{\sigma}D^{\sigma}\omega(x)=\omega(x)+\frac{D^{\sigma-1}\omega(0)}{\Gamma(\sigma)}x^{\sigma-1} \quad \text{for all} \quad  u \in C^\sigma [0,T],$$ 
	then the integral equation in (\ref{intequ}) is appeared.   	
	
	Now we suppose that  $\omega\in C^\sigma [0,T]$ is a solution of  integral equation (\ref{intequ}), and let us show that $u$ is a solution of the problem (\ref{intvalue}).  If  $D^{\sigma}$ is applied to the both sides of (\ref{intequ}), and then, if    
	$$D^{\sigma}I^{\sigma}\omega(x)=\omega(x) \ \ \text{for all} \ \ \omega\in C^\sigma [0,T]$$ 
	is used, then one can observe that $\omega \in C[0,T]$ satisfies the equation in (\ref{intvalue}). Moreover, let us prove that $\omega \in C^\sigma [0,T]$ also fulfils  initial value conditions. By change of variables and condition (C1) we have
	\begin{align}
	\omega(0)=\lim_{x\rightarrow 0^+}\omega(x)&=\frac{b}{\Gamma(\sigma)}x^{\sigma-1}+\frac{1}{\Gamma(\sigma)}\lim_{x\rightarrow0^+}\int_{0}^{x}\frac{f(t,\omega(t),D^{\sigma-1}\omega(t))}{(x-t)^{1-\sigma}}dt\nonumber  \\
	&=\frac{1}{\Gamma(\sigma)}\lim_{x\rightarrow 0^+}\int_{0}^{x}\frac{t^{\sigma-1}f(t,\omega(t),D^{\sigma-1}\omega(t))}{t^{\sigma-1}(x-t)^{1-\sigma}}dt \nonumber \\
	&=\frac{1}{\Gamma(\sigma)}\lim_{x\rightarrow 0^+}x\int_{0}^{1}\frac{(x\tau)^{\sigma-1} f(x\tau,\omega(x\tau),D^{\sigma-1}\omega(x\tau))}{\tau^{\sigma-1} (1-\tau)^{1-\sigma}}d\tau =0,\label{lem2}   
	\end{align}
	showing that $u$ satisfies the first initial condition in (\ref{intvalue}). 
	
	Now let us show that  $u$ provides the second initial condition in (\ref{intvalue}). If $D^{\sigma-1}$ is applied to both sides of (\ref{intequ}), and if the relation  $D^{\sigma-1}I^{\sigma}h(x)=Ih(x)$  is used, then we can first get
	\begin{equation}
	D^{\sigma-1}\omega(x)=D^{\sigma-1}\omega(0)+\int_{0}^{x}f(t,\omega(t),D^{\sigma-1}\omega(t))dt.
	\end{equation} 
	From here, we then obtain
	\begin{align}
	D^{\sigma-1}\omega\left(0\right)&= b+\lim_{x\rightarrow 0^+}\int_{0}^{x}f(t,\omega(t),D^{\sigma-1}\omega(t))dt\nonumber \\
	&=b+\lim_{x\rightarrow 0^+} \int_{0}^{x}\frac{1}{t^{\sigma-1}}t^{\sigma-1}f(t,\omega(t),D^{\sigma-1}\omega(t))dt \nonumber\\
	&=b+\lim_{x\rightarrow 0^+} \int_{0}^{1}\frac{x^{2-\sigma}}{\tau^{\sigma-1}}(x\tau)^{\sigma-1}f(x\tau,\omega(x\tau),D^{\sigma-1}\omega(x\tau))d\tau=b, \label{lem3}
	\end{align}
	since $2-\sigma>0$  and $t^{\sigma-1}f(t,\omega(t),D^{\sigma-1}\omega(t))$ is continuous on $[0,T].$ 
	Consequently, it has been shown that a solution of (\ref{intequ}) provides the problem (1.1) if condition (C1) is assumed to be satisfied. 
\end{proof}

\begin{theorem} [Existence] Let condition (C1) be satisfied, and assume that there exist positive real numbers $r$ and $\mathcal{M}$ such that
	$\left|x^{\sigma-1}f(x,\omega,v)\right|\leq \mathcal{M} \quad \text{for all} \quad (x,\omega,v)\in I=\left[0,T\right]\times\left[-r_{1},r_{1}\right] \times \left[ b-r_{2},b+r_{2}\right]$ 
	with $r_{1}+r_{2}\leq r.$  Then problem (\ref{intvalue}) admits at least one solution in  $C^\sigma [0,T_{0}],$ where 
	\begin{equation}\label{T0}
	T_{0}= \begin{cases}
	\quad T \quad &\text{if} \quad T<\frac{r}{C(b,\sigma,\mathcal{M})}  \\
	\quad \frac{r}{C(b,\sigma,\mathcal{M})}  \quad &\text{if}  \quad T\geq \frac{r}{C(b,\sigma,\mathcal{M})}\geq 1, \\
	\quad \left[ \frac{r}{C(b,\sigma,\mathcal{M})}\right]^{\sigma-1},&\text{if} \quad T\geq \frac{r}{C(b,\sigma,\mathcal{M})}, \quad 1\geq \frac{r}{C(b,\sigma,\mathcal{M})}   \quad\text{and} \quad 1<\sigma\leq 1.5 \\
	\quad \left[ \frac{r}{C(b,\sigma,\mathcal{M})}\right]^{2-\sigma}, &\text{if} \quad T\geq \frac{r}{C(b,\sigma,\mathcal{M})},\quad 1\geq \frac{r}{C(b,\sigma,\mathcal{M})}  \quad\text{and} \quad 1.5\leq<\sigma<2	
	\end{cases}
	\end{equation}
	and 
	\begin{equation}\label{CbsigmaM}
	C(b,\sigma,\mathcal{M})=
	\left[  \frac{\left| b\right| }{\Gamma(\sigma)}+\mathcal{M}\left(\frac{1+\Gamma(3-\sigma)}{2-\sigma}\right) \right]. 
	\end{equation}
	
\end{theorem}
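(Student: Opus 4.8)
The plan is to recast \eqref{intvalue} as a fixed-point problem and apply the Schauder fixed point theorem (Theorem \ref{Th2}). By Lemma \ref{lemma1}, solving \eqref{intvalue} in $C^{\sigma}[0,T_0]$ is equivalent to solving the integral equation \eqref{intequ} in $C^{\sigma-1}[0,T_0]$, so I would introduce the operator
\[
(\mathcal{S}\omega)(x)=\frac{b}{\Gamma(\sigma)}x^{\sigma-1}+\frac{1}{\Gamma(\sigma)}\int_{0}^{x}\frac{f(t,\omega(t),D^{\sigma-1}\omega(t))}{(x-t)^{1-\sigma}}dt
\]
on $C^{\sigma-1}[0,T_0]$, whose fixed points are precisely the desired solutions. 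The candidate invariant set is the closed, bounded, convex set
\[
\mathcal{C}=\big\{\omega\in C^{\sigma-1}[0,T_0]:\ \|\omega\|_{\infty}\le r_{1},\ \|D^{\sigma-1}\omega-b\|_{\infty}\le r_{2}\big\},
\]
which is nonempty because it contains $x\mapsto \tfrac{b}{\Gamma(\sigma)}x^{\sigma-1}$; membership in $\mathcal{C}$ is exactly the condition guaranteeing that $(t,\omega(t),D^{\sigma-1}\omega(t))$ stays in the box $I$ on which $f$ is controlled.

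The first substantial step is the self-mapping property $\mathcal{S}(\mathcal{C})\subseteq \mathcal{C}$. Applying $D^{\sigma-1}$ to $\mathcal{S}\omega$ and using $D^{\sigma-1}x^{\sigma-1}=\Gamma(\sigma)$ together with $D^{\sigma-1}I^{\sigma}h=Ih$ (as in the proof of Lemma \ref{lemma1}) gives $D^{\sigma-1}(\mathcal{S}\omega)(x)=b+\int_{0}^{x}f(t,\omega(t),D^{\sigma-1}\omega(t))\,dt$, so both $\mathcal{S}\omega$ and its fractional derivative are continuous and $\mathcal{S}$ indeed lands in $C^{\sigma-1}[0,T_0]$. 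Writing $|f|=t^{1-\sigma}\,|t^{\sigma-1}f|\le \mathcal{M}t^{1-\sigma}$ from the hypothesis, the two quantities to be controlled reduce to the elementary integral $\int_{0}^{x}t^{1-\sigma}dt=\frac{x^{2-\sigma}}{2-\sigma}$ and, via a Beta-function evaluation, $\frac{1}{\Gamma(\sigma)}\int_{0}^{x}t^{1-\sigma}(x-t)^{\sigma-1}dt=\Gamma(2-\sigma)\,x$. These produce
\[
|\mathcal{S}\omega(x)|+|D^{\sigma-1}(\mathcal{S}\omega)(x)-b|\le \frac{|b|}{\Gamma(\sigma)}x^{\sigma-1}+\mathcal{M}\Gamma(2-\sigma)\,x+\frac{\mathcal{M}}{2-\sigma}x^{2-\sigma},
\]
whose coefficients sum to $C(b,\sigma,\mathcal{M})$ of \eqref{CbsigmaM} once one uses $\Gamma(3-\sigma)=(2-\sigma)\Gamma(2-\sigma)$. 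Bounding the three powers $x^{\sigma-1},\,x,\,x^{2-\sigma}$ by a single power of $T_0$ — which one depends on whether $T_0$ exceeds $1$ and on whether $\sigma\le 1.5$ or $\sigma\ge 1.5$, since for arguments below $1$ the smallest exponent dominates — is exactly what dictates the four cases in \eqref{T0}; it keeps the combined estimate $\le r$, and since $r\ge r_{1}+r_{2}$ the two components are kept within $r_{1}$ and $r_{2}$ respectively.

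Next I would verify the two hypotheses of Theorem \ref{Th2}. Continuity of $\mathcal{S}$ on $\mathcal{C}$ follows from condition (C1): since $(t,u,v)\mapsto t^{\sigma-1}f(t,u,v)$ is continuous, hence uniformly continuous, on the compact box $I$, convergence $\omega_n\to\omega$ in $\|\cdot\|_{\sigma-1}$ yields uniform convergence of the weighted nonlinearities, and this passes through both the weakly singular integral and its $D^{\sigma-1}$-image by dominated convergence with the integrable majorant $\mathcal{M}t^{1-\sigma}$. For equicontinuity of $\mathcal{S}(\mathcal{C})$ in the norm of $C^{\sigma-1}[0,T_0]$ both components of the norm must be handled. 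The derivative component is easy: $|D^{\sigma-1}(\mathcal{S}\omega)(x_2)-D^{\sigma-1}(\mathcal{S}\omega)(x_1)|\le \mathcal{M}\int_{x_1}^{x_2}t^{1-\sigma}dt$ is a modulus of continuity independent of $\omega$. The function component requires estimating $|(\mathcal{S}\omega)(x_2)-(\mathcal{S}\omega)(x_1)|$ by splitting the weakly singular integral over $[0,x_1]$ and $[x_1,x_2]$ and invoking the same majorant, again uniformly in $\omega$.

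Once both hypotheses are verified, Theorem \ref{Th2} provides a fixed point $\omega^{*}\in\mathcal{C}$ of $\mathcal{S}$, which by Lemma \ref{lemma1} is a solution of \eqref{intvalue} in $C^{\sigma}[0,T_0]$, completing the argument. I expect the main obstacle to be the equicontinuity of the function component $x\mapsto(\mathcal{S}\omega)(x)$: the interplay of the boundary weight $t^{\sigma-1}$ (forced by the discontinuity of $f$ at $x=0$) with the weakly singular kernel $(x-t)^{\sigma-1}$ makes the two-interval estimate delicate, and it must be arranged so that the resulting bound is simultaneously uniform over $\omega\in\mathcal{C}$ and vanishing as $|x_2-x_1|\to 0$ up to the singular endpoint. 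A secondary technical point is the bookkeeping in \eqref{T0}: checking that each of the four cases genuinely bounds all three powers of $T_0$ by the single threshold $r/C(b,\sigma,\mathcal{M})$.
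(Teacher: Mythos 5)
Your strategy is the same as the paper's: convert to the integral equation via Lemma \ref{lemma1}, run Schauder (Theorem \ref{Th2}) on the operator $\mathcal{S}$ in $C^{\sigma-1}[0,T_0]$, and your quantitative estimates coincide with the paper's (the Beta-function evaluation $\frac{1}{\Gamma(\sigma)}\int_0^x t^{1-\sigma}(x-t)^{\sigma-1}dt=\Gamma(2-\sigma)x$, the bound $\frac{\mathcal{M}}{2-\sigma}x^{2-\sigma}$ on the derivative component, the identification of the coefficient sum with $C(b,\sigma,\mathcal{M})$ through $\Gamma(3-\sigma)=(2-\sigma)\Gamma(2-\sigma)$, and the case analysis over the exponents $\sigma-1,\,1,\,2-\sigma$). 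However, there is a genuine gap in your self-mapping step, and it comes from your choice of invariant set. You take the product box $\mathcal{C}=\{\omega:\ \|\omega\|_{\infty}\le r_1,\ \|D^{\sigma-1}\omega-b\|_{\infty}\le r_2\}$, but you only verify the \emph{combined} estimate $|\mathcal{S}\omega(x)|+|D^{\sigma-1}\mathcal{S}\omega(x)-b|\le C(b,\sigma,\mathcal{M})T_0^{\alpha}\le r$, and then assert that ``since $r\ge r_1+r_2$ the two components are kept within $r_1$ and $r_2$ respectively.'' That inference is false: a bound on the sum says nothing about how the sum is distributed between its two terms. For example, the estimate permits $\|\mathcal{S}\omega\|_{\infty}$ to be close to $r$ while $r_1$ is much smaller than $r$, in which case $\mathcal{S}\omega\notin\mathcal{C}$. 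To make your set work you would need the separate inequalities $\frac{|b|}{\Gamma(\sigma)}T_0^{\sigma-1}+\mathcal{M}\Gamma(2-\sigma)T_0\le r_1$ and $\frac{\mathcal{M}}{2-\sigma}T_0^{2-\sigma}\le r_2$, and these do not follow from the stated definition of $T_0$, which encodes only the combined threshold $r$. (A secondary symptom of the same problem: even nonemptiness of $\mathcal{C}$ via your candidate $\frac{b}{\Gamma(\sigma)}x^{\sigma-1}$ requires $\frac{|b|}{\Gamma(\sigma)}T_0^{\sigma-1}\le r_1$, which is likewise not guaranteed.)

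The repair is exactly what the paper does: replace $\mathcal{C}$ by the sum-ball $B_r=\{\omega\in C^{\sigma-1}[0,T_0]:\ \|\omega\|_{\infty}+\|D^{\sigma-1}\omega-b\|_{\infty}\le r\}$, which is still closed, bounded and convex. Your combined estimate is then precisely the invariance $\mathcal{S}(B_r)\subset B_r$, with no need to split it into components. Membership in $B_r$ still keeps $(t,\omega(t),D^{\sigma-1}\omega(t))$ inside the region where $|x^{\sigma-1}f|\le\mathcal{M}$, because for each fixed $t$ one may apply the hypothesis with $r_1=|\omega(t)|$ and $r_2=|D^{\sigma-1}\omega(t)-b|$, whose sum is at most $r$; this is the reading of the condition ``$r_1+r_2\le r$'' under which both the theorem statement and the proof are coherent. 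With this single change of invariant set, the remainder of your argument --- continuity of $\mathcal{S}$ from the uniform continuity of $t^{\sigma-1}f$ with the integrable majorant $\mathcal{M}t^{1-\sigma}$, equicontinuity of both norm components, and the Schauder conclusion --- goes through and matches the paper's proof.
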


\begin{proof}
	As it is known from Lemma \ref{lemma1}, solutions of problem \ref{intvalue} are solutions of integral equation \ref{intequ} as well. Moreover, the fixed points of the opeator $\mathcal{S}:C^{\sigma-1} [0,T_{0}]\to C^{\sigma-1} [0,T_{0}]$ defined by
	\begin{align} \label{soperator}
	\mathcal{S}\omega(x)=\frac{b}{\Gamma(\sigma)}x^{\sigma-1}+\int_{0}^{x}\frac{f(t,\omega(t),D^{\sigma-1}\omega(t))}{(x-t)^{1-\sigma }}dt 
	\end{align}
	interfere with solutions of the integral equation. For this reason, it is  sufficient to prove that operator $\mathcal{S}$ admits at least one fixed point. For this, it will be shown that operator $\mathcal{S}$ satisfies the hypotheses of Schauder fixed-point theorem. Let us start with showing the following inclusion to be valid:  
	$$\mathcal{S}(B_r)\subset B_r$$
	where 
	\begin{align*} 
	B_r=\{\omega \in C^{\sigma-1}[0,T_{0}]: ||\omega||_{\infty}+||D^{\sigma-1}\omega-b||_{\infty} \leq r \} 
	\end{align*}
	is a closed compact subset of $C^{\sigma-1}[0,T_{0}].$  Accordingly to norm on $C^{\sigma-1} [0,T_{0}],$  upper bounds of $\left\|\mathcal{S}\omega(x) \right\|_{\infty}$ and  $\left\| D^{\sigma-1}\mathcal{S}\omega(x)-b\right\|_{\infty}$ can be determined as follows: 
	
	\begin{align} 
	\left|\mathcal{S}\omega(x)\right| &\leq  \frac{\left|b\right| }{\Gamma(\sigma)}x^{\sigma-1}+\frac{1}{\Gamma(\sigma)}\int_{0}^{x}\frac{\left|t^{\sigma-1}f(t,\omega(t),D^{\sigma-1}\omega(t)\right|}{t^{\sigma-1}(x-t)^{1-\sigma }}dt \nonumber \\
	&\leq  \frac{\left|b\right| }{\Gamma(\sigma)}x^{\sigma-1}+\frac{\mathcal{M}}{\Gamma(\sigma)}\int_{0}^{1}\frac{x}{\tau^{\sigma-1}(1-\tau)^{1-\sigma }}d\tau 
    \leq  \frac{\left|b\right| }{\Gamma(\sigma)}x^{\sigma-1}+\Gamma(2-\sigma)\mathcal{M}x \label{soperator1}
	\end{align}
	and  
	\begin{align} 
	\left|D^{\sigma-1}\mathcal{S}\omega(x)-b\right| &\leq  \int_{0}^{x}\frac{\left|t^{\sigma-1}f(t,\omega(t),D^{\sigma-1}\omega(t)\right|}{t^{\sigma-1}}dt   \mathcal{M}x^{2-\sigma} \int_{0}^{1}\tau^{1-\sigma}d\tau= \frac{\mathcal{M}x^{2-\sigma}}{2-\sigma}. \label{soperator2}
	\end{align}
	From (\ref{soperator1}) and (\ref{soperator2}), 
	\begin{align}
	\left|\mathcal{S}\omega(x)\right|+\left|D^{\sigma-1}\mathcal{S}\omega(x)-b\right| \leq \frac{\left|b\right| }{\Gamma(\sigma)}x^{\sigma-1}+\Gamma(2-\sigma)\mathcal{M}x+ \frac{\mathcal{M}x^{2-\sigma}}{2-\sigma}  
	\end{align}
	is obtained. Taking supremum over $[0,T_0]$ for a $T_0 >0$ for the right hand-side of the above equation,   
	\begin{align}
	\left|\mathcal{S}\omega(x)\right|+\left|D^{\sigma-1}\mathcal{S}\omega(x)-b\right| \leq C(b,\sigma,\mathcal{M}) T_0 ^{\alpha}  
	\end{align}
	can be written, where $\alpha\in\Omega=\left\lbrace \sigma-1,1,2-\sigma\right\rbrace.$ $\alpha$ depends on values of $b,\mathcal{M},\sigma,r. $ To determine $T_0$ and  $\alpha$, let   $$C(b,\sigma,\mathcal{M}) T_0 ^{\alpha}=r.$$ 
	If $T_0 ^{\alpha}=\frac{r}{C(b,\sigma,\mathcal{M})}<1,$ then it is observed that $T_0 <1$ for any $\alpha\in \Omega.$ If $T_0 ^{\alpha} =\frac{r}{C(b,\sigma,\mathcal{M})}\geq1,$ it must be $T_0 \geq 1$
	for any $\alpha\in \Omega.$ Thus, 
	\begin{align}
	\sup_{x\in [0,T_0]}\left[ \left|\mathcal{S}\omega(x)\right|+\left|D^{\sigma-1}\mathcal{S}\omega(x)-b\right|\right]  \leq C(b,\sigma,\mathcal{M}) T_0 ^{\alpha}=r,  
	\end{align}
	where $$T_0:=\left[\frac{r}{C(b,\sigma,\mathcal{M})}\right]^{1/\alpha} $$
	and 
	\begin{equation}\label{alpha}
	\alpha= \begin{cases}
	\quad 1 \quad &\text{if} \quad  \frac{r}{C(b,\sigma,\mathcal{M})}\geq 1  \\
	\quad \sigma-1\quad &\text{if} \quad \frac{r}{C(b,\sigma,\mathcal{M})}<1 \quad \text{and} \quad  1<\sigma\leq1.5\\
	\quad 2-\sigma \quad &\text{if} \quad \frac{r}{C(b,\sigma,\mathcal{M})}<1 \quad \text{and} \quad  1.5\leq\sigma<2.\\
	\end{cases}
	\end{equation}
	
	Consequently, for all cases we obtain $$||\mathcal{S}\omega||_{\infty}+||D^{\sigma-1}\mathcal{S}\omega-b||_{\infty}\leq r,$$ which is the desired result.  
	
	Now, let us prove the equicontinuity of $\mathcal{S}(B_{r})\subset C^{\sigma-1} [0,T_{0}].$ Since the composition of uniformly continuous functions is so as well, the function $x^{\sigma-1}f(x,\omega(x),D^{\sigma-1}\omega(x
	))$ is uniformly continuous on $[0,T_{0}].$ Because for any  $\omega\in B_{r},$ both $\omega(x)$ and $D^{\sigma-1}\omega(x)$ and   $x^{\sigma-1}f(x,\omega,v)$ are
	uniformly continuous on $I,$ respectively. 
	Therefore, for given any $\epsilon>0,$ one can find a $%
	\delta=\delta(\epsilon)>0$ so that for all $x_{1},x_{2}\in [0,T_{0}]$ with $\left|x_{1}-x_{2}\right|<\delta$ it is
	$$\left \vert x_{1}^{\sigma-1}f(x_{1},\omega(x_{1}),D^{\sigma-1}\omega(x
	))-x_{2}^{\sigma-1}f(x_{2},\omega(x_{2}),D^{\sigma-1}\omega(x_{2}
	))\right \vert <K\epsilon ,$$
	where $K=\max\left(\frac{1}{T_{0}\Gamma(2-\sigma)}, \frac{2-\sigma}{T_{0}^{2-\sigma}}\right).$
	It follows that   
	\begin{align*}
     \big|\mathcal{S}&\omega\left(x_{1}\right)-\mathcal{S}\omega\left(x_{2}\right) \big|+\big|D^{\sigma-1}\mathcal{S}\omega\left(x_{1}\right)-D^{\sigma-1}\mathcal{S}\omega\left(x_{2}\right) \big| \\
     &\leq \int_{0}^{1} \frac{\left \vert
		h\left( \eta x_{1}\right) -h\left( \eta x_{2}\right)  \right
		\vert}{\Gamma \left(\sigma\right)\eta^{1-\sigma} \left( 1-\eta \right) ^{\sigma-1}}x d\eta+\int_{0}^{1} \frac{\left \vert
		h\left( \eta x_{1}\right) -h\left( \eta x_{2}\right)  \right
		\vert}{\eta^{1-\sigma} }x^{2-\sigma}d\eta\\
 &<T_{0}\Gamma(2-\sigma)K\epsilon+\frac{T_{0}^{2-\sigma}}{2-\sigma}K\epsilon=\epsilon,
	\end{align*}
	where $h(x)=x^{\sigma-1}f\left(x,\omega\left(x\right),D^{\sigma-1}\omega(x
	) \right).$ This implies that  $\mathcal{S}(B_{r})
	$ is an equicontinuous set of $C^\sigma [0,T_{0}].$ \\
	
	Finally, the continuity of $\mathcal{S}$ on $B_{r}$ will be proven. Assume  that $\left \{\omega_{k}\right \}_{k=1}^{\infty}\subset B_{r}$ is a sequence with $\omega_{k}\stackrel{C^\sigma [0,T_{0}]}{\rightarrow} \omega$ as $k\rightarrow \infty.$ Then, one can easily conclude that $\omega_{k}$ and $D^{\sigma-1}\omega_{k}(t)$ 
	converges uniformly to $\omega$ and $D^{\sigma-1}\omega(t
	),$ respectively. With these and the uniform continuity of $x^{\sigma-1}f(x,\omega,v)$ on $I=\left[0,T\right]\times\left[-r_{1},r_{1}\right] \times \left[ b-r_{2},b+r_{2}\right],$ it leads to
	\begin{align*}
	&\left \|\mathcal{S}\omega_{k}-\mathcal{S}\omega\right \|_{\sigma-1}=\sup_{x\in [0,T_{0}]} \left \vert \frac{1}{\Gamma \left(
		\sigma\right)}\int_{0}^{x}\frac{ \left[f\left(t,\omega_{k}(t
		),D^{\sigma-1}\omega_{k}(t
		)\right) -f\left(t ,\omega(t),D^{\sigma-1}\omega(t
		)\right)\right] }{\left(x-t\right)
		^{1-\sigma}}dt \right \vert\\
	&+\sup_{x\in [0,T_{0}]} \left \vert \int_{0}^{x} \left[f\left(t,\omega_{k}(t
	),D^{\sigma-1}\omega_{k}(t
	)\right) -f\left(t ,\omega(t),D^{\sigma-1}\omega(t
	)\right)\right] dt \right \vert
	\\
	&\leq \sup_{\eta x\in [0,T_{0}]} \int_{0}^{1}\frac{(\eta x)^{\sigma-1} \left|f\left(\eta x,\omega_{k}(\eta x),D^{\sigma-1}\omega_{k}(\eta x
		)\right)-f\left( \eta x ,\omega(\eta x),D^{\sigma-1}\omega(\eta x
		)\right)\right|}{\Gamma \left(\sigma\right)\eta^{\sigma-1}\left( 1-\eta \right) ^{1-\sigma}} xd\eta \\
	&+\sup_{\eta x\in [0,T_{0}]} \int_{0}^{1}\frac{(\eta x)^{\sigma-1} \left|f\left(\eta x,\omega_{k}(\eta x),D^{\sigma-1}\omega_{k}(\eta x
		)\right)-f\left( \eta x ,\omega(\eta x),D^{\sigma-1}\omega(\eta x
		)\right)\right|}{\eta^{\sigma-1}} x^{2-\sigma}d\eta \\
	&\rightarrow 0 \quad \text{as} \quad k\rightarrow \infty.
	\end{align*}  
	
	In conclusion, since hypotheses of Theorem \ref{Th2} are fulfilled, it implies that operator $\mathcal{S}$ admits at least one fixed point in $C^\sigma [0,T_{0}],$  which is a solution of problem (\ref{intvalue}) as well.
\end{proof}

The mean value theorem for R-L derivative of order $\sigma\in (0,1)$ was correctly given by \cite{San3}. Now, its counterparts for order $\sigma\in (1,2)$ reads as follows:
\begin{lemma}\label{lemma1.2}   
	Let $\sigma \in (1,2)$ and $\omega\in C^{\sigma-1}\left( \left[ 0,T\right]\right)  .$ Then, there is a function  $\mu:[0,T]\to [0,T]$ with $0<\mu(x)<x $ so that
	$$\omega(x)=-D^{\sigma-1}\omega(0)\frac{x^{\sigma-1}}{\Gamma(\sigma)}+\Gamma(2-\sigma)(\mu(x))^{\sigma-1}D^{\sigma-1}\omega(\mu(x)),$$
	is satisfied. 
\end{lemma}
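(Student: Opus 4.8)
The plan is to deduce the statement from the representation $\omega=I^{\sigma-1}D^{\sigma-1}\omega$ together with the first mean value theorem for integrals, exactly as in the order-$(0,1)$ result of \cite{San3} but now with the reduced order $\beta:=\sigma-1\in(0,1)$. Since $\omega\in C^{\sigma-1}([0,T])$, both $\omega$ and $g:=D^{\sigma-1}\omega$ are continuous on $[0,T]$, so $g(0)=D^{\sigma-1}\omega(0)$ is a finite number and, $\omega$ being continuous, the boundary term $I^{\,2-\sigma}\omega(0)$ occurring in the compositional relation of Lemma \ref{lemma} vanishes because $2-\sigma>0$. Specialising that lemma to order $\beta$ therefore gives the clean representation \[ \omega(x)=I^{\sigma-1}D^{\sigma-1}\omega(x)=\frac{1}{\Gamma(\sigma-1)}\int_{0}^{x}\frac{g(t)}{(x-t)^{2-\sigma}}\,dt, \] which is the identity I would feed into the mean value theorem.

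Fixing $x\in(0,T]$, I would view $w(t):=(x-t)^{\sigma-2}$ as a weight on $[0,x]$. Because $\sigma-2\in(-1,0)$, $w$ is strictly positive and integrable, with $\int_{0}^{x}w(t)\,dt=x^{\sigma-1}/(\sigma-1)$. As $g$ is continuous on the compact interval $[0,x]$, the first mean value theorem for integrals applied to the sign-definite weight $w$ furnishes a point $\mu=\mu(x)\in(0,x)$ for which $\int_{0}^{x}w(t)g(t)\,dt=g(\mu)\int_{0}^{x}w(t)\,dt$. Substituting this into the displayed representation and using $\Gamma(\sigma)=(\sigma-1)\Gamma(\sigma-1)$ collapses the constant to $1/\Gamma(\sigma)$ and yields the intermediate-point relation $\omega(x)=\frac{x^{\sigma-1}}{\Gamma(\sigma)}\,D^{\sigma-1}\omega(\mu(x))$, which carries all the content of the theorem. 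To exhibit the value at the origin explicitly, as in the statement, I would first split $g(t)=\big(g(t)-g(0)\big)+g(0)$, integrate the constant part exactly against $w$, and apply the mean value theorem only to the remaining integral; regrouping then produces a summand proportional to $D^{\sigma-1}\omega(0)\,x^{\sigma-1}$ alongside the intermediate term, the $\Gamma$-factors being a matter of bookkeeping. The same conclusion follows, in the style of the earlier proofs in this paper, from the substitution $t=x\tau$, which turns the representation into $\omega(x)=\frac{x^{\sigma-1}}{\Gamma(\sigma-1)}\int_{0}^{1}(1-\tau)^{\sigma-2}g(x\tau)\,d\tau$ and lets one apply the mean value theorem in $\tau\in(0,1)$, giving $\mu(x)=x\tau^{\ast}$.

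To exhibit $\mu$ as a function on the whole of $[0,T]$, I would for each $x>0$ select one admissible intermediate point (the theorem asserts only existence, claiming no regularity of $\mu$ in $x$) and set $\mu(0):=0$, at which value both sides vanish by continuity. The strict inequalities $0<\mu(x)<x$ come from the strict positivity of $w$ on $(0,x)$: the weighted average $g(\mu)$ lies between $\min_{[0,x]}g$ and $\max_{[0,x]}g$, and unless $g$ is constant (in which case any interior point serves) these bounds are strict, so the value is attained strictly inside $(0,x)$.

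The step I expect to be the genuine obstacle is licensing the first mean value theorem for integrals when the weight $w$ has an integrable singularity at $t=x$, since the classical statement presumes a bounded weight. I would remove this difficulty either by truncating to $[0,x-\varepsilon]$, applying the classical theorem there, and letting $\varepsilon\to0^{+}$ (the passage to the limit being justified by the integrability of $w$ and the boundedness of the continuous $g$), or, more directly, by observing that $\xi\mapsto g(\xi)\int_{0}^{x}w$ is continuous and takes the values $\big(\min_{[0,x]}g\big)\int_{0}^{x}w$ and $\big(\max_{[0,x]}g\big)\int_{0}^{x}w$, between which $\int_{0}^{x}wg$ lies, so that the intermediate value theorem supplies $\mu$. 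The remaining computations—the Beta integral $\int_{0}^{1}(1-\tau)^{\sigma-2}\,d\tau=1/(\sigma-1)$ and the $x\to0^{+}$ check—are routine.
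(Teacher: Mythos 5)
Your argument is internally sound up to and including the identity
\[
\omega(x)=\frac{x^{\sigma-1}}{\Gamma(\sigma)}\,D^{\sigma-1}\omega(\mu(x)),\qquad \mu(x)\in(0,x)
\]
(the reduction $\omega=I^{\sigma-1}D^{\sigma-1}\omega$ is correctly justified, and your truncation/intermediate-value handling of the singular weight is the right fix), but this is not the lemma, and the step where you pass from it to the stated formula by splitting off $g(0)$ and ``bookkeeping'' is a genuine gap. Carrying out that split and applying the mean value theorem to the remainder gives
\[
\omega(x)=D^{\sigma-1}\omega(0)\frac{x^{\sigma-1}}{\Gamma(\sigma)}+\frac{x^{\sigma-1}}{\Gamma(\sigma)}\bigl(D^{\sigma-1}\omega(\mu(x))-D^{\sigma-1}\omega(0)\bigr),
\]
which simply collapses back to the same identity: the intermediate term is weighted by $x^{\sigma-1}/\Gamma(\sigma)$, a function of $x$ alone, and the initial-value term carries a plus sign. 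In the lemma the intermediate term is weighted by $\Gamma(2-\sigma)(\mu(x))^{\sigma-1}$ — a coefficient depending on the intermediate point itself — and the initial-value term has a minus sign. No regrouping of your identity can produce that structure; these are two genuinely different statements, not two bookkeepings of one. (Indeed, the formula as literally printed is false: for $\omega(x)=x^{\sigma-1}$ one has $D^{\sigma-1}\omega\equiv\Gamma(\sigma)$, and the printed identity forces $\mu(x)=x\bigl(2/(\Gamma(2-\sigma)\Gamma(\sigma))\bigr)^{1/(\sigma-1)}$, which exceeds $x$ when $\sigma$ is near $1$.)

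The route the paper intends — it omits the proof, deferring to \cite{San3}, but the intended form is visible in how the lemma is invoked in \eqref{unique5} — runs through order $\sigma$, not $\sigma-1$: write $\omega(x)=D^{\sigma-1}\omega(0)\frac{x^{\sigma-1}}{\Gamma(\sigma)}+I^{\sigma}D^{\sigma}\omega(x)$, insert $t^{\sigma-1}/t^{\sigma-1}$ so that the continuous function $h(t)=t^{\sigma-1}D^{\sigma}\omega(t)$ appears (this is where condition (C1) enters), and apply the mean value theorem with the doubly singular weight $t^{1-\sigma}(x-t)^{\sigma-1}$, whose total mass is the Beta value $x\,B(2-\sigma,\sigma)=x\,\Gamma(2-\sigma)\Gamma(\sigma)$. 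That is precisely what generates the factors $\Gamma(2-\sigma)$ and $(\mu(x))^{\sigma-1}$, and it yields
\[
\omega(x)=D^{\sigma-1}\omega(0)\frac{x^{\sigma-1}}{\Gamma(\sigma)}+\Gamma(2-\sigma)\,x\,(\mu(x))^{\sigma-1}\,D^{\sigma}\omega(\mu(x)),
\]
the version actually used in the Nagumo proof (note the factor $x_{0}$ and the derivative of order $\sigma$ there; the printed lemma drops the factor $x$, writes $D^{\sigma-1}$ for $D^{\sigma}$, and inherits its minus sign from the sign slip in Lemma \ref{lemma}). Two consequences for your write-up: first, your kernel $(x-t)^{\sigma-2}$ can never produce a Beta factor $\Gamma(2-\sigma)$ or a $\mu$-dependent coefficient, so your approach cannot be repaired into a proof of this lemma; second, the lemma in its usable form needs the extra hypothesis that $x^{\sigma-1}D^{\sigma}\omega(x)$ is continuous (for solutions of \eqref{intvalue} this is supplied by (C1)), which your hypothesis $\omega\in C^{\sigma-1}([0,T])$ does not give — under it, $D^{\sigma}\omega$ need not even exist.
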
  

The lemma can be proved by following the way used in \cite{San3} and we omit it here. By the help of this lemma we can obtain the Nagumo type uniqueness:  

\begin{theorem} (\textit{Nagumo type uniqueness}) Let $1<\sigma<2,$ $0<T<\infty$ and  let condition {\rm \textbf{(C1)}} be satisfied. Moreover, assume that there exists a positive real number $L\leq \frac{2-\sigma}{T(1+\Gamma(3-\sigma))}$ such that the inequality
	\begin{align}\label{unique1}
	x^{\sigma-1}\left|f(x,t_{1,1},t_{2,1})-f(x,t_{1,2},t_{2,2})\right|\leq L\left( \left|t_{1,1}-t_{1,2}\right|+\left|t_{2,1}-t_{2,2}\right|\right)  
	\end{align}
	is fulfilled for all $x\in[0,T]$ and for all $t_{1,i},t_{2,i}\in\mathbb{R}$ with $i=1,2.$ Then, \eqref{intvalue} has at most one solution  in the space of $C^{\sigma-1}(\left[0,T_0\right]).$

\begin{proof}
	We have just showed the existence of the solution to problem \eqref{intvalue} in the previous theorem. For the uniqueness, we first assume that \eqref{intvalue} admits two different
	solutions such as $\omega_{1}$ and $\omega_{2}$ in the space of $C^{\sigma-1}(\left[0,T_0\right]).$ Let us define a function $\Phi(x)$ to be in the form
	$$
	\Phi(x):=\begin{cases}
	\left|\omega_{1}(x)-\omega_{2}(x)\right|+\left|D^{\sigma-1}\omega_{1}(x)-D^{\sigma-1}\omega_{2}(x)\right|,& x>0 \\
	\ \ \ \ \ \ \ \ \ \ \ \ \ \ \ \ \ \ \ \ 0 \ \ \ \ \ \ \  \ \ \ \  ,& x=0.
	\end{cases}
	$$ 
	
	Since $\omega_{1},\omega_{2}\in C^{\sigma-1}(\left[0,T\right]),$ the continuity of  $\Phi(x)$  on $x\in (0,T_0]$ can obviously be seen. For its continuity at $x=0,$  
	\begin{align*}
	0\leq\lim_{x\to 0^{+}}\Phi(x)&=\lim_{x\to 0^{+}} \frac{1}{\Gamma(\sigma)}\left|\int_{0}^{x}\frac{f\left(t ,\omega_{1}\left(t\right),D^{\sigma-1}\omega_{1}\left(t\right)\right)-f\left(t ,\omega_{2}\left(t\right),D^{\sigma-1}\omega_{2}\left(t\right)\right)}{\left(x-t\right) ^{1-\sigma}}
	dt\right| \\
	&+\lim_{x\to 0^{+}}\left|\int_{0}^{x}  f\left(t ,\omega_{1}\left(t\right),D^{\sigma-1}\omega_{1}\left(t\right)\right)-f\left(t ,\omega_{2}\left(t\right),D^{\sigma-1}\omega_{2}\left(t\right)\right)
	dt\right| \\
	&\leq\int_{0}^{1}\frac{\lim_{x\to 0^{+}}x\left| H\left(x\eta,\omega_{1}\left( x\eta\right)\right)-H\left(x\eta,\omega_{2}\left( x\eta\right)\right) \right| }{\eta^{\sigma-1}\left(1-\eta\right)^{1-\sigma}}d\eta \\
	&+\int_{0}^{1} \frac{\lim_{x\to 0^{+}}x^{2-\sigma} \left| H\left(x\eta,\omega_{1}\left( x\eta\right)\right)-H\left(x\eta,\omega_{2}\left( x\eta\right)\right) \right|}{\eta^{\sigma-1}}d\eta=0,	 
	\end{align*}
	where $H(x,\omega(x))=x^{\sigma-1}f\left(x,\omega\left(x\right),D^{\sigma-1}\omega(x
	) \right)$ and  we made the change of variable $t=x\eta$ and used condition  (C1), respectively. Consequently,  $\lim_{x\to 0^{+}}\Phi(x)=0=\Phi(0).$ \\ 
	
	The fact that  $\Phi(x)\geq 0$ on $[0,T]$ allows us to choose a point $x_{0}\in (0,T]$ so that
	\begin{align*}  
	0<\Phi(x_{0})&=\left|\omega_{1}(x_{0})-\omega_{2}(x_{0})\right| +\left|D^{\sigma-1}\omega_{1}(x_{0})-D^{\sigma-1}\omega_{2}(x_{0})\right|.
	\end{align*}
	By using the mean value theorem in Lemma \ref{lemma1.2}  
	\begin{align}\label{unique5}
	\left|\omega_{1}(x_{0})-\omega_{2}(x_{0})\right|&=\Gamma(2-\sigma)x_{0}\left|x_{0,1}^{\sigma-1}D^{\sigma}(\omega_{1}-\omega_{2})(x_{0,1})\right|+\left|D^{\sigma-1}\omega_{1}(x_{0})-D^{\sigma-1}\omega_{2}(x_{0})\right|\nonumber \\
	&=\Gamma(2-\sigma)x_{0}x_{0,1}^{\sigma-1}\left|f\left(x_{0,1},\omega_{1}\left(x_{0,1}\right)\right)-f\left(x_{0,1},\omega_{2}\left(x_{0,1}\right)\right)\right|
	\end{align}
	is  obtained for $x_{0,1}\in (0,x_{0}).$  
	
	Secondly, for the estimation of 
	$\left|D^{\sigma-1}\omega_{1}(x_{0})-D^{\sigma-1}\omega_{2}(x_{0})\right|,$ we have from the well-known integral mean theorem for the classical calculus
	\begin{align}\label{unique6}
	\left|D^{\sigma-1}\omega_{1}(x_{0})-D^{\sigma-1}\omega_{2}(x_{0})\right|&=
	\int_{0}^{x_0}\frac{t^{\sigma-1}\left|f(t,\omega_{1}(t),D^{\sigma-1}\omega_{1}(t)-f(t,\omega_{2}(t),D^{\sigma-1}\omega_{2}(t)\right|}{t^{\sigma-1}}dt \nonumber\\
	&=\frac{x^{2-\sigma}_{0}}{2-\sigma}x_{0,2}^{\sigma-1}\left|f\left(x_{0,2},\omega_{1}\left(x_{0,2}\right)\right)-f\left(x_{0,2},\omega_{2}\left(x_{0,2}\right)\right)\right|,
	\end{align}
	where $x_{0,2}\in (0,x_{0}).$   
	
	We specify $x_{1}$ as one of the points $x_{0,1}$ and $x_{0,2}$ so that   $\left| H(x_{1},\omega_{1}(x_{1}))-H(x_{1},\omega_{2}(x_{1}))\right|$ $ :=\max\left(\left| H(x_{0,1},\omega_{1}(x_{0,1}))-H(x_{0,1},\omega_{2}(x_{0,1}))\right|,\left| H(x_{0,2},\omega_{1}(x_{0,2}))-H(x_{0,2},\omega_{2}(x_{0,2}))\right| \right) . $ 
	
	Thus, from \eqref{unique5} and \eqref{unique6}, we have
	\begin{align} \label{unique7}
	0&<\Phi(x_{0})\leq\left(\Gamma(2-\sigma)x_{0}+\frac{x^{2-\sigma}_{0}}{2-\sigma}\right) \left| H(x_{1},\omega_{1}(x_{1}))-H(x_{1},\omega_{2}(x_{1}))\right|\nonumber\\
	&\leq T \left( \frac{1+\Gamma(3-\sigma)}{2-\sigma}\right)x_{1}^{\sigma-1} \left|f\left(x_{1},\omega_{1}\left(x_{1}\right)\right)-f\left(x_{1},\omega_{2}\left(x_{1}\right)\right)\right|\nonumber\\
	&\leq TL \left( \frac{1+\Gamma(3-\sigma)}{2-\sigma}\right)x_{1}^{\sigma-1} \left( \left|\omega_{1}(x_{1})-\omega_{2}(x_{1})\right| +\left|D^{\sigma-1}\omega_{1}(x_{1})-D^{\sigma-1}\omega_{2}(x_{1})\right|\right) =\Phi(x_{1})\nonumber
	\end{align}
	since $L\leq \frac{2-\sigma}{T(1+\Gamma(3-\sigma))}.$ Repeating the same procedure for the point $x_{1},$ it enables us to find some points $x_{2}\in(0,x_{1})$ so that
	$0<\Phi(x_{0})\leq\Phi(x_{1})\leq \Phi(x_{2}).$
	Continuing in the same way,  the sequence
	$\left\{x_{n}\right\}_{n=1}^{\infty}\subset [0,x_{0})$
	can be constructed so that  $x_{n}\to 0$ and
	\begin{equation}\label{uniq}
	0<\Phi(x_{0})\leq\Phi(x_{1})\leq\Phi(x_{2})\leq...\leq\Phi(x_{n})\leq... 
	\end{equation}
	
	However, the fact that $\Phi(x)$ is continuous at $x=0$ and
	$x_{n}\to 0$  leads to $\Phi(x_{n})\to \Phi(0)=0,$ and this
	contradicts with \eqref{uniq}. Consequently,  IVP
	\eqref{intvalue} possesses a unique solution.
\end{proof}
\end{theorem}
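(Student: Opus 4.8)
The plan is to argue by contradiction through a monotone-iteration scheme in the spirit of Nagumo. First I would suppose that \eqref{intvalue} admits two solutions $\omega_1,\omega_2\in C^{\sigma-1}([0,T_0])$ and encode their discrepancy in the single auxiliary function
\[
\Phi(x)=\left|\omega_1(x)-\omega_2(x)\right|+\left|D^{\sigma-1}\omega_1(x)-D^{\sigma-1}\omega_2(x)\right|,\quad x>0,
\]
with $\Phi(0)=0$. The first task is to verify that $\Phi$ is continuous on all of $[0,T_0]$. Continuity on $(0,T_0]$ is immediate from $\omega_i\in C^{\sigma-1}$, so the only genuine point is continuity at the origin. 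For that I would express $\omega_1-\omega_2$ and $D^{\sigma-1}(\omega_1-\omega_2)$ via the integral equation \eqref{intequ}, insert the weight $t^{\sigma-1}/t^{\sigma-1}$, make the substitution $t=x\eta$, and let $x\to0^+$; condition {\rm \textbf{(C1)}} keeps the integrands bounded while the prefactors $x$ and $x^{2-\sigma}$ (recall $2-\sigma>0$) force the limit to vanish, giving $\lim_{x\to0^+}\Phi(x)=0=\Phi(0)$.

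The heart of the argument is the iteration step. Assuming $\Phi\not\equiv0$, continuity together with $\Phi\ge0$ lets me select $x_0\in(0,T_0]$ with $\Phi(x_0)>0$, and I would estimate the two summands of $\Phi(x_0)$ separately. For $\left|\omega_1(x_0)-\omega_2(x_0)\right|$ I apply the mean value theorem of Lemma \ref{lemma1.2} to $\omega_1-\omega_2$; since both solutions carry the same datum $D^{\sigma-1}\omega_i(0)=b$, the boundary term cancels and I obtain, for some $x_{0,1}\in(0,x_0)$, a bound of the form $\Gamma(2-\sigma)\,x_0\,x_{0,1}^{\sigma-1}\bigl|f(x_{0,1},\omega_1(x_{0,1}),\cdot)-f(x_{0,1},\omega_2(x_{0,1}),\cdot)\bigr|$. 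For $\left|D^{\sigma-1}\omega_1(x_0)-D^{\sigma-1}\omega_2(x_0)\right|$ I use the representation $D^{\sigma-1}\omega_i(x)=b+\int_0^x f(t,\omega_i,D^{\sigma-1}\omega_i)\,dt$ derived in Lemma \ref{lemma1}, insert the weight $t^{\sigma-1}$, and apply the classical integral mean value theorem to the continuous factor $t^{\sigma-1}\bigl[f(\cdot,\omega_1)-f(\cdot,\omega_2)\bigr]$ against $t^{1-\sigma}$, whose integral over $(0,x_0)$ equals $x_0^{2-\sigma}/(2-\sigma)$; this yields a point $x_{0,2}\in(0,x_0)$ and its coefficient.

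I would then write $H(x,\omega(x))=x^{\sigma-1}f(x,\omega(x),D^{\sigma-1}\omega(x))$ and pick $x_1\in\{x_{0,1},x_{0,2}\}$ at which $\left|H(\cdot,\omega_1)-H(\cdot,\omega_2)\right|$ is the larger. Summing the two estimates, bounding the intermediate contributions by $T$, and using $\Gamma(3-\sigma)=(2-\sigma)\Gamma(2-\sigma)$ to collapse the constants, the combined coefficient becomes $T\bigl(\frac{1+\Gamma(3-\sigma)}{2-\sigma}\bigr)$; invoking the Lipschitz hypothesis \eqref{unique1} then turns the right-hand side into $TL\bigl(\frac{1+\Gamma(3-\sigma)}{2-\sigma}\bigr)\Phi(x_1)$. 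The admissibility bound $L\le\frac{2-\sigma}{T(1+\Gamma(3-\sigma))}$ is precisely what renders this leading factor $\le1$, giving $0<\Phi(x_0)\le\Phi(x_1)$ with $x_1<x_0$. Iterating the construction produces a sequence $x_n\to0$ along which $\Phi$ is non-decreasing and bounded below by $\Phi(x_0)>0$, whereas continuity at $0$ forces $\Phi(x_n)\to\Phi(0)=0$, the desired contradiction; hence the solution is unique.

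The step I expect to be the main obstacle is the iteration estimate itself: ensuring the weighted mean value theorems genuinely apply (the factor $t^{\sigma-1}\bigl[f(\cdot,\omega_1)-f(\cdot,\omega_2)\bigr]$ must extend continuously to $t=0$, which is exactly where {\rm \textbf{(C1)}} enters) and carefully tracking the two constants $\Gamma(2-\sigma)x_0$ and $x_0^{2-\sigma}/(2-\sigma)$ so that, after maximizing over $x_{0,1}$ and $x_{0,2}$ and bounding by $T$, they fuse into a single factor that the hypothesis on $L$ makes $\le1$. Everything else is routine bookkeeping around the change of variable $t=x\eta$.
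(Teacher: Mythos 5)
Your proposal follows the paper's own proof essentially step for step: the same auxiliary function $\Phi$, the same continuity-at-zero argument via $t=x\eta$ and condition (C1), the same use of Lemma \ref{lemma1.2} and the classical integral mean value theorem to produce $x_{0,1}$ and $x_{0,2}$, the same maximizing choice of $x_1$ fusing the constants into $T\bigl(\frac{1+\Gamma(3-\sigma)}{2-\sigma}\bigr)$, and the same monotone sequence $x_n\to 0$ contradicting $\Phi(0)=0$. It is correct and essentially identical to the paper's argument.
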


\begin{theorem} (\textit{Krasnoselskii-Krein type uniqueness}) Let $1<\sigma<2$  and $ T^{*}_{0}=\min\left\lbrace T_{0},1 \right\rbrace,$ where $T_0$ is defined by \eqref{T0}. Let condition (C1) be fulfilled. Furthermore, suppose that there exists a $L>0$  so that
	\begin{align}\label{unique2}
	x^{\sigma-1}\left|f(x,t_{1,1},t_{2,1})-f(x,t_{1,2},t_{2,2})\right|\leq \frac{L}{2}\left( \left|t_{1,1}-t_{1,2}\right|+\left|t_{2,1}-t_{2,2}\right|\right)  
	\end{align}
	holds for all $x\in[0,T]$ and for all $t_{1,i},t_{2,i}\in\mathbb{R}$ with $i=1,2,$  and that there exist $C>0$ and $\alpha \in (0,1)$ satisfying	 $(1-\sigma)(1-\alpha)-L(1-\alpha)+1>0$ such that
	\begin{align}\label{unique3}
	x^{\sigma-1}\left|f(x,t_{1,1},t_{2,1})-f(x,t_{1,2},t_{2,2})\right|\leq C\left( \left|t_{1,1}-t_{1,2}\right|+x^{\alpha(\sigma-1)}\left|t_{2,1}-t_{2,2}\right|\right) 	\end{align}
	holds for all $x\in[0,T]$ and for all $t_{1,i},t_{2,i}\in\mathbb{R}$ with $i=1,2.$ 
	Then, problem \eqref{intvalue} has an unique solution  in the space of $C^{\sigma-1}(\left[0,T^{*}_{0}\right]).$
 \begin{proof}
	As  claimed in the previous theorem, we first assume that problem \eqref{intvalue} has two different solutions such as $\omega_{1}(x)$ and $\omega_{2}(x)$ in $C^{\sigma-1}(\left[0,T^{*}_{0}\right]).$ However, by contradiction, we will show that it can not be happen. For this, let us first define $\Phi_{1}(x)=\left|\omega_{1}(x)-\omega_{2}(x)\right|$ and $\Phi_{2}(x)=\left|D^{\sigma-1}\omega_{1}(x)-D^{\sigma-1}\omega_{2}(x)\right|$ and try to find estimates for each functions by using condition {\rm \textbf{(C1)}} and  inequality \eqref{unique3}. Hence, we first have    
	\begin{align*}
	\Phi_{1}(x)&\leq \frac{1}{\Gamma(\sigma)}\int_{0}^{x}\frac{\left|f\left(t ,\omega_{1}\left(t\right),D^{\sigma-1}\omega_{1}\left(t\right)\right)-f\left(t ,\omega_{2}\left(t\right),D^{\sigma-1}\omega_{2}\left(t\right)\right)\right|}{\left(x-t\right) ^{1-\sigma}}
	dt \\
	&\leq \frac{1}{\Gamma(\sigma)}\int_{0}^{x}\frac{t^{\sigma-1}\left|f\left(t ,\omega_{1}\left(t\right),D^{\sigma-1}\omega_{1}\left(t\right)\right)-f\left(t ,\omega_{2}\left(t\right),D^{\sigma-1}\omega_{2}\left(t\right)\right)\right|}{t^{\sigma-1}\left(x-t\right) ^{1-\sigma}}
	dt \\
	&\leq \frac{C}{\Gamma(\sigma)}  \int_{0}^{x}\frac{\left[\Phi^{\alpha}_{1}(x)+t^{\alpha(\sigma-1)}\Phi^{\alpha}_{2}(x) \right] }{t^{\sigma-1}\left(x-t\right) ^{1-\sigma}}
	dt \\
	&\leq \frac{C}{\Gamma(\sigma)}\left(  \int_{0}^{x}\left(\frac{1}{t^{\sigma-1}\left(x-t\right) ^{1-\sigma}}\right) ^{q} 
	dt\right)^{1/q}\left(  \int_{0}^{x}\left[\Phi^{\alpha}_{1}(t)+t^{\alpha(\sigma-1)}\Phi^{\alpha}_{2}(t) \right]^{p} 
	dt\right)^{1/p} \\
	&\leq \frac{C\Gamma(1+(1-\sigma)q)\Gamma(1+(\sigma-1)q))}{\Gamma(\sigma)}x^{1/q}\Omega^{1/p}(x)
	\end{align*}
	where we used Hölder inequality with $q>1$ satisfying $(1-\sigma)q+1>0$ and $p=q/(q-1),$ and $\Omega(x)$ is defined by
	$$\Omega(x)=\int_{0}^{x}\left[\Phi^{\alpha}_{1}(t)+t^{\alpha(\sigma-1)}\Phi^{\alpha}_{2}(t) \right]^{p}dt.$$
	
	From here, we have the following estimation 
	\begin{align}\label{estimation1}
	\Phi^{p}_{1}(x)\leq Cx^{p/q}\Omega(x),
	\end{align}
	where $C$ is not specified here and throughout the proof. In addition to this, the upper bound for $\Phi_{2}(x)$ can be found as follows:
	\begin{align*}
	\Phi_{2}(x)&\leq \int_{0}^{x} \left|f\left(t ,\omega_{1}\left(t\right),D^{\sigma-1}\omega_{1}\left(t\right)\right)-f\left(t ,\omega_{2}\left(t\right),D^{\sigma-1}\omega_{2}\left(t\right)\right)\right| 
	dt \\
	&\leq \int_{0}^{x}\frac{t^{\sigma-1}\left|f\left(t ,\omega_{1}\left(t\right),D^{\sigma-1}\omega_{1}\left(t\right)\right)-f\left(t ,\omega_{2}\left(t\right),D^{\sigma-1}\omega_{2}\left(t\right)\right)\right|}{t^{\sigma-1} }
	dt \\
	&\leq \frac{C}{\Gamma(\sigma)}  \int_{0}^{x}\frac{\left[\Phi^{\alpha}_{1}(x)+t^{\alpha(\sigma-1)}\Phi^{\alpha}_{2}(x) \right] }{t^{\sigma-1}\left(x-t\right) ^{1-\sigma}}
	dt \\
	&\leq C \left(  \int_{0}^{x}\left(\frac{1}{t^{\sigma-1}}\right)^{q} 
	dt\right)^{1/q}\left(  \int_{0}^{x}\left[\Phi^{\alpha}_{1}(t)+t^{\alpha(\sigma-1)}\Phi^{\alpha}_{2}(t) \right]^{p} 
	dt\right)^{1/p} \\
	&\leq \frac{C\Gamma(1+(1-\sigma)q)}{\Gamma(2+(\sigma-1)q))}x^{(1+q(1-\sigma))/q}\Omega^{1/p}(x).
	\end{align*}
	From here, 
	\begin{align}\label{estimation2}
	\Phi^{p}_{2}(x)\leq Cx^{(1+q(1-\sigma))p/q}\Omega(x) 
	\end{align}
	is then obtained. By using estimations in \eqref{estimation1} and \eqref{estimation2} in the derivative of $\Omega(x)$ we have 
	\begin{align}\label{estimation3}
	\Omega'(x)&=\left[\Phi^{\alpha}_{1}(x)+t^{\alpha(\sigma-1)}\Phi^{\alpha}_{2}(x) \right]^{p}\leq 2^{p-1}\left[\left( \Phi^{\alpha}_{1}(x)\right) ^{p}+t^{p\alpha(\sigma-1)}\left( \Phi^{\alpha}_{2}(x)\right) ^{p}\right] \nonumber\\
	&= 2^{p-1}\left[\left( \Phi^{p}_{1}(x)\right)^{\alpha} +x^{p\alpha(\sigma-1)}\left( \Phi^{p}_{2}(x)\right)^{\alpha} \right]\nonumber\\
	&\leq 2^{p-1}\left[C^{\alpha}x^{\alpha p/q}\Omega^{\alpha}(x)+ C^{\alpha}x^{p\alpha(\sigma-1)}x^{(1+q(1-\sigma))\alpha p/q}\Omega^{\alpha}(x)\right]\leq Cx^{\alpha p/q}\Omega^{\alpha}(x). 
	\end{align}
	If we multiply the both sides of the above inequality with $(1-\alpha)\Omega^{-\alpha}(x),$ 
	\begin{align*}
	(1-\alpha)\Omega^{-\alpha}(x)\Omega'(x)=\frac{d}{dx}\left[\Omega^{1-\alpha}(x)\right] \leq Cx^{\alpha p/q}.
	\end{align*}
	is then obtained.  Integrating the both sides of the inequality over $[0,x],$ we get 
	\begin{align*}
	\Omega^{1-\alpha}(x)\leq Cx^{(\alpha p+q)/q},
	\end{align*}
	since $\Omega(0)=0.$ Consequently, this leads to the following estimation on $\Omega(x)$ 
	\begin{align}\label{estimation7}
	\Omega(x)\leq Cx^{(\alpha p+q)/(1-\alpha)q}.
	\end{align}
	By considering \eqref{estimation1} and  \eqref{estimation2} together with \eqref{estimation7}, one can conclude that
	\begin{align*}
	\Phi^{p}_{1}(x)\leq Cx^{p/q}\Omega(x)\leq Cx^{p/q}x^{(\alpha p+q)/(1-\alpha)pq}=Cx^{p+q/(1-\alpha)q}.
	\end{align*}
	or 
	\begin{align} \label{est4}
	\Phi_{1}(x)\leq Cx^{(p+q)/(1-\alpha)pq}=x^{1/(1-\alpha)},
	\end{align}
	and 
	\begin{align*}
	\Phi^{p}_{2}(x)\leq Cx^{p(1+q(1-\sigma))/q}\Omega(x)\leq Cx^{p(1+q(1-\sigma))/q}x^{(\alpha p+q)/(1-\alpha)pq}=Cx^{\frac{(1-\alpha)(1-\sigma)pq+p+q}{(1-\alpha)q}}.
	\end{align*}
	or
	\begin{align} \label{est5}
	\Phi_{2}(x)\leq Cx^{(1-\sigma)+\frac{p+q}{(1-\alpha)pq}}=x^{(1-\sigma)+\frac{1}{(1-\alpha)}},
	\end{align}
	since $\frac{p+q}{pq}=1.$ \\
	
	Let us now define $$\Psi(x)=x^{-L}\max\left\lbrace \Phi_{1}(x),\Phi_{2}(x)\right\rbrace,$$ 
	where $L(1-\alpha)<1+(1-\sigma)(1-\alpha).$ 
	If $\Phi_{1}(x)=\max\left\lbrace \Phi_{1}(x),\Phi_{2}(x)\right\rbrace,$ then from \eqref{est4} we get 
	$$0\leq \Psi(x)\leq x^{\frac{1}{1-\alpha}-L},$$  
	or in the case of $\Phi_{2}(x)=\max\left\lbrace \Phi_{1}(x),\Phi_{2}(x)\right\rbrace,$ by the inequality \eqref{est5} we have the following 
	$$0\leq \Psi(x)\leq x^{(1-\sigma)+\frac{1}{1-\alpha}-L}=x^{\frac{(1-\sigma)(1-\alpha)-L(1-\alpha)+1}{1-\alpha}}.$$  
	
	In both cases, $\Psi(x)$ is continuous on $[0,T^{*}_{0}]$ and  $\Psi(0)=0.$  Let us now show that $\Psi(x)\equiv 0$ on $[0,T^{*}_{0}].$ For this, suppose otherwise and let $\Psi(x)\not\equiv 0.$ This means $\Psi(x)>0,$ and from its continuity one can say that there exists a point $x_{1}\in [0,T^{*}_{0}]$ so that $\Psi(x)$ takes its maximum value at that point. Thus, let $$M=\Psi(x_{1})=\max_{x\in [0,T^{*}_{0}]}\Psi(x).$$ 
	
	By assuming $\Psi(x)=x^{-L}\Phi_{1}(x),$  
	\begin{align}
	M&=\Psi(x_{1})=x_{1}^{-L}\Phi_{1}(x_{1}) \\
	&\leq\frac{x_{1}^{-L}}{\Gamma(\sigma)}\int_{0}^{x_{1}}\frac{t^{\sigma-1}\left|f\left(t ,\omega_{1}\left(t\right),D^{\sigma-1}\omega_{1}\left(t\right)\right)-f\left(t ,\omega_{2}\left(t\right),D^{\sigma-1}\omega_{2}\left(t\right)\right)\right|}{t^{\sigma-1}\left(x-t\right) ^{1-\sigma}}dt \nonumber
	\\
	&\leq\frac{Lx_{1}^{-L}}{2\Gamma(\sigma)}\int_{0}^{x_{1}}\left(x-t\right) ^{\sigma-1}t^{1-\sigma+k}\left[\Phi_{1}(t)+\Phi_{2}(t) \right] dt \nonumber\leq\frac{Lx_{1}^{-L}}{\Gamma(\sigma)}\int_{0}^{x_{1}}\left(x-t\right) ^{\sigma-1}t^{1-\sigma+k}\Psi(t) dt \nonumber \\
	&\leq M\frac{L\Gamma(2-\sigma+L)}{\Gamma(2+L)}x_{1}< M  \nonumber
	\end{align}
	is obtained for  $t\in [0,T^{*}_{0}],$  since $\frac{L\Gamma(2-\sigma+L)}{\Gamma(2+L)}<1$ for $1<\sigma<2.$ However, it is a contradiction. 
	
	On the other hand, when  $\Psi(x)=x^{-L}\Phi_{2}(x),$ we get 
	\begin{align}
	M&=\Psi(x_{1})=x_{1}^{-L}\Phi_{2}(x_{1})\\
	&\leq x_{1}^{-L}  \int_{0}^{x_{1}}\frac{t^{\sigma-1}\left|f\left(t ,\omega_{1}\left(t\right),D^{\sigma-1}\omega_{1}\left(t\right)\right)-f\left(t ,\omega_{2}\left(t\right),D^{\sigma-1}\omega_{2}\left(t\right)\right)\right|}{t^{\sigma-1}}dt \nonumber
	\\
	&\leq\frac{Lx_{1}^{-L}}{2}\int_{0}^{x_{1}}t^{1-\sigma+L}\left[\Phi_{1}(t)+\Phi_{2}(t) \right] dt \leq L x_{1}^{-L} \int_{0}^{x}t^{1-\sigma+L}\Psi(t) dt \nonumber \\
	&\leq M\frac{L}{L+2-\sigma}x^{L+2-\sigma}_{1}< M,  \nonumber
	\end{align}
	which is contraction as well. 
	
	Consequently, it must be $\Psi(x)\equiv 0$ on $[0,T^{*}_{0}].$ This gives us the uniqueness of solutions of the considered problem. 
\end{proof}
\end{theorem}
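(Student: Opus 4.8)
The plan is to argue by contradiction in the spirit of the classical Krasnoselskii--Krein theorem, adapted to the singular fractional kernel. Suppose \eqref{intvalue} admits two solutions $\omega_1,\omega_2\in C^{\sigma-1}([0,T^{*}_{0}])$ and introduce the two difference quantities $\Phi_1(x)=|\omega_1(x)-\omega_2(x)|$ and $\Phi_2(x)=|D^{\sigma-1}\omega_1(x)-D^{\sigma-1}\omega_2(x)|$; the goal is to force $\Phi_1\equiv\Phi_2\equiv 0$. The characteristic feature of this type of result is that two hypotheses are used for two distinct purposes: the sublinear H\"older bound \eqref{unique3} (with $\alpha\in(0,1)$) is what produces a \emph{quantitative polynomial decay} of $\Phi_1,\Phi_2$ near $x=0$, while the Lipschitz bound \eqref{unique2} is what closes the argument through a weighted maximum. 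So first I would use \eqref{unique3} to obtain power-law estimates, and then use \eqref{unique2} to upgrade them to identically zero.

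For the first stage I would represent each solution through the integral equation \eqref{intequ}, so that $\Phi_1(x)$ is controlled by $\frac{1}{\Gamma(\sigma)}\int_0^x(x-t)^{\sigma-1}\,|f(t,\omega_1,\ldots)-f(t,\omega_2,\ldots)|\,dt$ and $\Phi_2(x)$ by the corresponding integral arising from $D^{\sigma-1}$. After inserting the balancing factor $t^{\sigma-1}/t^{\sigma-1}$ so that \eqref{unique3} is applicable, the integrand is dominated by $C\,[\Phi_1^{\alpha}(t)+t^{\alpha(\sigma-1)}\Phi_2^{\alpha}(t)]$ times the singular kernel. The key device is H\"older's inequality with a conjugate pair $(q,p)$ in which $q>1$ is chosen so small that $(1-\sigma)q+1>0$ (possible since $1/(\sigma-1)>1$); this decouples the singular kernel from the nonlinear part and packages the latter into the single scalar
$$\Omega(x)=\int_{0}^{x}\big[\Phi_1^{\alpha}(t)+t^{\alpha(\sigma-1)}\Phi_2^{\alpha}(t)\big]^{p}\,dt,$$
yielding the coupled bounds $\Phi_1^{p}(x)\leq Cx^{p/q}\Omega(x)$ and $\Phi_2^{p}(x)\leq Cx^{(1+q(1-\sigma))p/q}\Omega(x)$.

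The second stage is to close these into one scalar differential inequality. Differentiating $\Omega$, substituting the two bounds, and applying the convexity estimate $(a+b)^{p}\leq 2^{p-1}(a^{p}+b^{p})$, I would arrive at a Bernoulli-type inequality $\Omega'(x)\leq Cx^{\alpha p/q}\Omega^{\alpha}(x)$ with $\alpha\in(0,1)$. Multiplying by $(1-\alpha)\Omega^{-\alpha}$ converts the left side to $\frac{d}{dx}\Omega^{1-\alpha}$, and integrating from $0$ (using $\Omega(0)=0$) gives $\Omega(x)\leq Cx^{(\alpha p+q)/(1-\alpha)q}$. Feeding this back into the coupled bounds produces the polynomial estimates $\Phi_1(x)\leq Cx^{1/(1-\alpha)}$ and $\Phi_2(x)\leq Cx^{(1-\sigma)+1/(1-\alpha)}$ on $[0,T^{*}_{0}]$.

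For the final stage I would set $\Psi(x)=x^{-L}\max\{\Phi_1(x),\Phi_2(x)\}$. The two polynomial estimates together with the hypothesis $(1-\sigma)(1-\alpha)-L(1-\alpha)+1>0$ make the exponent of $x$ positive in each branch, so $\Psi$ extends continuously to $[0,T^{*}_{0}]$ with $\Psi(0)=0$. If $\Psi\not\equiv 0$, it attains a positive maximum $M=\Psi(x_1)$ at some $x_1>0$; inserting the integral representation and now invoking the Lipschitz bound \eqref{unique2} with $\Psi(t)\leq M$ yields $M\leq M\,\frac{L\Gamma(2-\sigma+L)}{\Gamma(2+L)}\,x_1<M$ in the $\Phi_1$ branch (and an analogous strict bound in the $\Phi_2$ branch), a contradiction forcing $\Psi\equiv 0$, hence $\Phi_1\equiv\Phi_2\equiv 0$ and uniqueness on $[0,T^{*}_{0}]$. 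I expect the main obstacle to be the exponent bookkeeping: one must pick $q$ so the singular kernel is integrable, track how exponents propagate through the H\"older step and the Bernoulli integration, and confirm that the standing hypothesis on $\alpha,L,\sigma$ is precisely what both makes the weighted function vanish at the origin and keeps $\frac{L\Gamma(2-\sigma+L)}{\Gamma(2+L)}<1$ in the contradiction step.
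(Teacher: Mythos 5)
Your proposal is correct and follows essentially the same route as the paper's own proof: the same two-stage structure (H\"older's inequality with a conjugate pair $(q,p)$ chosen so that $(1-\sigma)q+1>0$, the auxiliary function $\Omega(x)$, the Bernoulli-type inequality $\Omega'(x)\leq Cx^{\alpha p/q}\Omega^{\alpha}(x)$ yielding the polynomial decay estimates, and then the weighted maximum function $\Psi(x)=x^{-L}\max\{\Phi_1(x),\Phi_2(x)\}$ contradicted at its positive maximum via \eqref{unique2} and the factor $\frac{L\Gamma(2-\sigma+L)}{\Gamma(2+L)}x_1<1$). The exponent bookkeeping and the role of the standing hypothesis on $\alpha,L,\sigma$ are exactly as in the paper, so there is nothing to add.
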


\begin{theorem} [Osgood-type uniqueness] Let $1<\sigma<2,$  and let $T_0$ be defined by \eqref{T0} and condition (C1) be satisfied. Furthermore, suppose that for all $x\in[0,T]$ and for all $t_{1,i},t_{2,i}\in\mathbb{R}$ with $i=1,2,$  the equality
\begin{align}\label{osgood}
x^{\sigma-1}\left|f(x,t_{1,1},t_{2,1})-f(x,t_{1,2},t_{2,2})\right|\leq C \left( g\left(\left|t_{1,1}-t_{1,2}\right|^{p}+\left|t_{2,1}-t_{2,2}\right|^{p}\right)\right) ^{1/p}  
\end{align}
is fulfilled, where $p>1$ is conjugate of $q>1$ satisfying  $1+(1-\sigma)q>0,$
$$C^{q}\geq 2\max\left(  b\Gamma^{q}(\sigma)[T_0\Gamma(1+(1-\sigma)q)\Gamma(1+(\sigma-1)q)]^{-1} ,  (1+(1-\sigma)q)  T_{0}^{-1-q(1-\sigma)} \right) ,$$
and, $g$ is a continuous, non-negative and  non-decreasing function in  $[0,T_0]$ so that $g(0)=0$  and it satisfies
	\begin{align}\label{osgood3}
	\lim_{\epsilon\to 0^{+}}\int_{\epsilon}^{\gamma}\frac{du}{g(u)}=\infty
	\end{align}
	for any $\gamma\in\mathbb{R}.$ 	
	Then, \eqref{intvalue} has an unique solution  in the space of $C^{\sigma-1}(\left[0,T^{*}_{0}\right]).$

\begin{proof}	As made in previously given uniqueness theorems, we assume that there exist two different solutions such as $\omega_{1}(x)$ and $\omega_{2}(x)$ to problem \eqref{intvalue} in $C^{\sigma-1}(\left[0,T^{*}_{0}\right]).$ Moreover, let 
	$\Phi_{1}(x)=\left| \omega_{1}(x)-\omega_{2}(x)\right| $  and $\Phi_{2}(x)=\left| D^{\sigma-1}\omega_{1}(x)-D^{\sigma-1}\omega_{2}(x)\right| .$ At first, we get the estimation on $\Phi_{1}(x)$ as follows:
	\begin{align}
	\Phi_{1}(x)&\leq \frac{1}{\Gamma(\sigma)}\int_{0}^{x}\frac{t^{\sigma-1}\left|f\left(t ,\omega_{1}\left(t\right),D^{\sigma-1}\omega_{1}\left(t\right)\right)-f\left(t ,\omega_{2}\left(t\right),D^{\sigma-1}\omega_{2}\left(t\right)\right)\right|}{t^{\sigma-1}\left(x-t\right) ^{1-\sigma}}
	dt \nonumber\\
	&\leq \frac{C}{\Gamma(\sigma)}  \int_{0}^{x}\frac{\left[  g\left(\left|\omega_{1}\left(t\right)-\omega_{2}\left(t\right)\right|^{p}+\left|D^{\sigma-1}\omega_{1}\left(t\right)-D^{\sigma-1}\omega_{2}\left(t\right)\right|^{p}\right)\right]  ^{1/p}}{t^{\sigma-1}\left(x-t\right) ^{1-\sigma}}
	dt \nonumber\\
	&\leq \frac{C}{\Gamma(\sigma)}\left(  \int_{0}^{x}\left(\frac{1}{t^{\sigma-1}\left(x-t\right) ^{1-\sigma}}\right) ^{q} 
	dt\right)^{1/q}\left(  \int_{0}^{x}  g\left(\Phi^{p}_{1}(t)+\Phi^{p}_{2}(t)\right) 
	dt\right)^{1/p} \nonumber\\
	&\leq C\left[ \frac{\Gamma(1+(1-\sigma)q)\Gamma(1+(\sigma-1)q)}{\Gamma^{q}(\sigma)}\right]^{1/q}x^{1/q}\left( \int_{0}^{x}  g\left(\Phi^{p}_{1}(t)+\Phi^{p}_{2}(t)\right) 
	dt\right)^{1/p} \nonumber\\
	&\leq \frac{1}{2^{1/p}}\left( \int_{0}^{x} g\left(\Phi^{p}_{1}(t)+\Phi^{p}_{2}(t)\right) 
	dt\right)^{1/p}
	\end{align}	
	where we used the inequality \eqref{osgood}, Hölder inequality and the assumption on $C,$ respectively. From here, it follows that 
	\begin{align}\label{estim1}
	\Phi^{p}_{1}(x)\leq  \frac{1}{2}\int_{0}^{x} g\left(\Phi^{p}_{1}(t)+\Phi^{p}_{2}(t)\right) 
	dt .
	\end{align}
	Similarly to above, we have
	\begin{align}
	\Phi_{2}(x)&\leq \int_{0}^{x}\frac{t^{\sigma-1}\left|f\left(t ,\omega_{1}\left(t\right),D^{\sigma-1}\omega_{1}\left(t\right)\right)-f\left(t ,\omega_{2}\left(t\right),D^{\sigma-1}\omega_{2}\left(t\right)\right)\right|}{t^{\sigma-1}}
	dt \nonumber\\
	&\leq C  \int_{0}^{x}\frac{\left[  g\left(\left|\omega_{1}\left(t\right)-\omega_{2}\left(t\right)\right|^{p}+\left|D^{\sigma-1}\omega_{1}\left(t\right)-D^{\sigma-1}\omega_{2}\left(t\right)\right|^{p}\right)\right]  ^{1/p}}{t^{\sigma-1}}
	dt \nonumber\\
	&\leq C\left(  \int_{0}^{x}\left(\frac{1}{t^{\sigma-1}}\right) ^{q} 
	dt\right)^{1/q}\left(  \int_{0}^{x}  g\left(\Phi^{p}_{1}(t)+\Phi^{p}_{2}(t)\right) 
	dt\right)^{1/p} \nonumber\\
	&\leq C\left[ \frac{1}{(1+(1-\sigma)q)}\right]^{1/q} x^{(1+q(1-\sigma))/q}\left( \int_{0}^{x}  g\left(\Phi^{p}_{1}(t)+\Phi^{p}_{2}(t)\right) 
	dt\right)^{1/p} \nonumber\\
	&\leq \frac{1}{2^{1/p}}\left( \int_{0}^{x} g\left(\Phi^{p}_{1}(t)+\Phi^{p}_{2}(t)\right) 
	dt\right)^{1/p}.
	\end{align}	
	This leads to 
	\begin{align} \label{estim2}
	\Phi^{p}_{2}(x)\leq  \frac{1}{2}\int_{0}^{x} g\left(\Phi^{p}_{1}(t)+\Phi^{p}_{2}(t)\right) 
	dt .
	\end{align}
	
	Now, set 
	$$ \Psi(x):=\max_{0\leq t\leq x} \left[ \Phi^{p}_{1}(x)+\Phi^{p}_{2}(x)\right],$$
	and assume that  $\Psi(x)>0$ for $x\in (0,T_{0}].$ We will show that it can not be possible under assumptions. 
	
	From the definition of $ \Psi(x)$ one easily conclude that for each $x\in [0,T_0],$
	it is  $\Phi^{p}_{1}(x)+\Phi^{p}_{2}(x)\leq  \Psi(x)$ and there exists a $x_{1}\leq x$ so that $ \Psi(x)=\Phi^{p}_{1}(x_{1})+\Phi^{p}_{2}(x_{1}).$ Then, from estimations \eqref{estim1}-\eqref{estim2} and from the fact that $g$ is non-decreasing function
	\begin{align}
	\Psi(x)=\Phi^{p}_{1}(x_{1})+\Phi^{p}_{2}(x_{1})\leq  \int_{0}^{x_{1}} g\left(\Phi^{p}_{1}(t)+\Phi^{p}_{2}(t)\right)\leq \int_{0}^{x} g\left(\Psi(t)\right)dt:=\Psi_{*}(x)
	\end{align}	
	is then obtained.  It can be seen that $\Psi(x)\leq \Psi_{*}(x).$ Moreover, we have 
	$$\frac{d}{dx}\Psi_{*}(x)=g\left(\Psi(x)\right)\leq g\left(\Psi_{*}(x)\right)$$   
	for all $x\in [0,T_0].$ From this fact, for sufficiently small $\delta>0,$ we have 
	$$\int_{\delta}^{T_{0}}\frac{\Psi^{'}_{*}(x)}{g\left(\Psi_{*}(x)\right)}dx\leq T_{0}-\delta.$$
	Furthermore, by changing variables $u=\Psi_{*}(x)$ in the above integral and by using the continuity of $\Psi_{*}(x)$ and $\Psi_{*}(0)=0$ , we have 
	$$\int_{\epsilon}^{\gamma}\frac{du}{g\left(u\right)} \leq T_{0}-\delta.$$
	for sufficiently small $\epsilon>0$ with $\epsilon=\Psi_{*}(\delta)$ and for $\gamma=\Psi_{*}(T_{0}).$ However, this contradicts with the assumption on $g$ given in \eqref{osgood3}. Consequently, $\Psi(x)=0$ for $x\in [0,T_{0}],$ i.e. $\omega_{1}(x)=\omega_{2}(x).$
\end{proof}
\end{theorem}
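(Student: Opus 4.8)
The plan is to argue by contradiction, mirroring the structure of the two preceding uniqueness theorems but replacing the Lipschitz and power estimates by an Osgood-type integral inequality driven by $g$. Suppose that problem \eqref{intvalue} admits two solutions $\omega_{1},\omega_{2}\in C^{\sigma-1}(\left[0,T^{*}_{0}\right])$ and set $\Phi_{1}(x)=\left|\omega_{1}(x)-\omega_{2}(x)\right|$ and $\Phi_{2}(x)=\left|D^{\sigma-1}\omega_{1}(x)-D^{\sigma-1}\omega_{2}(x)\right|$. First I would subtract the integral representations \eqref{intequ} of the two solutions; the $bx^{\sigma-1}/\Gamma(\sigma)$ terms cancel, leaving $\Phi_{1}(x)$ bounded by the integral of the difference of the nonlinearities against the kernel $(x-t)^{\sigma-1}/\Gamma(\sigma)$. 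Inserting the factor $t^{\sigma-1}/t^{\sigma-1}$ then lets me apply the structural hypothesis \eqref{osgood} to the genuinely bounded quantity $t^{\sigma-1}\left|f-f\right|$, isolating a $\bigl[g(\Phi_{1}^{p}+\Phi_{2}^{p})\bigr]^{1/p}$ factor inside the integral.

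The core estimate comes from Hölder's inequality with the conjugate pair $(q,p)$: I would pair the singular factor $\bigl(t^{-(\sigma-1)}(x-t)^{\sigma-1}\bigr)$ with the $L^{q}$ norm and the term $\bigl[g(\Phi_{1}^{p}+\Phi_{2}^{p})\bigr]^{1/p}$ with the $L^{p}$ norm. The condition $1+(1-\sigma)q>0$ guarantees that $\int_{0}^{x}\bigl(t^{-(\sigma-1)}(x-t)^{\sigma-1}\bigr)^{q}\,dt$ is a convergent Beta integral, evaluating to $x\,\Gamma(1+(1-\sigma)q)\Gamma(1+(\sigma-1)q)$ up to the $\Gamma(\sigma)$ prefactor. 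The assumed lower bound on $C^{q}$ is engineered so that the resulting constant is at most $2^{-1/p}$, yielding the clean inequality $\Phi_{1}^{p}(x)\leq \frac{1}{2}\int_{0}^{x} g\bigl(\Phi_{1}^{p}(t)+\Phi_{2}^{p}(t)\bigr)\,dt$. Applying $D^{\sigma-1}$ to \eqref{intequ} and using $D^{\sigma-1}I^{\sigma}h=Ih$ reduces the second difference to the milder kernel $t^{-(\sigma-1)}$; the same Hölder step, now with $\int_{0}^{x}t^{-(\sigma-1)q}\,dt$ finite by the same exponent condition, together with the matching half of the $C^{q}$ bound, gives $\Phi_{2}^{p}(x)\leq \frac{1}{2}\int_{0}^{x} g\bigl(\Phi_{1}^{p}(t)+\Phi_{2}^{p}(t)\bigr)\,dt$.

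Adding the two estimates produces $\Phi_{1}^{p}(x)+\Phi_{2}^{p}(x)\leq \int_{0}^{x} g\bigl(\Phi_{1}^{p}(t)+\Phi_{2}^{p}(t)\bigr)\,dt$. To invoke the divergence condition \eqref{osgood3} I would pass to the monotone majorant $\Psi(x)=\max_{0\leq t\leq x}\bigl[\Phi_{1}^{p}(t)+\Phi_{2}^{p}(t)\bigr]$ and $\Psi_{*}(x)=\int_{0}^{x} g(\Psi(t))\,dt$. Since $g$ is non-negative and non-decreasing, one gets $\Psi\leq \Psi_{*}$ and $\Psi_{*}'(x)=g(\Psi(x))\leq g(\Psi_{*}(x))$. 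Assuming $\Psi>0$ on $(0,T^{*}_{0}]$, dividing by $g(\Psi_{*}(x))$, integrating from $\delta$ to $T_{0}$ and substituting $u=\Psi_{*}(x)$ gives $\int_{\epsilon}^{\gamma} du/g(u)\leq T_{0}-\delta$ with $\epsilon=\Psi_{*}(\delta)\to 0^{+}$ as $\delta\to 0^{+}$, which contradicts \eqref{osgood3}. Hence $\Psi\equiv 0$, so $\omega_{1}=\omega_{2}$.

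I expect the main obstacle to be the constant bookkeeping in the Hölder step: choosing a single $q$ that simultaneously renders both kernel integrals convergent, evaluating the Beta and Gamma factors correctly, and verifying that the two-sided lower bound imposed on $C^{q}$ indeed forces each prefactor below $2^{-1/p}$ so that the two half-inequalities add to a clean majorant without an extra constant. Once the combined integral inequality is in hand, the concluding Osgood-type argument via $\Psi_{*}$ is routine.
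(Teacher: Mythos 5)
Your proposal follows essentially the same route as the paper's own proof: the same insertion of $t^{\sigma-1}/t^{\sigma-1}$ to invoke \eqref{osgood}, the same H\"older split with the Beta-integral evaluation of the kernel and the $C^{q}$ condition producing the two half-inequalities $\Phi_{i}^{p}(x)\leq\tfrac12\int_{0}^{x}g(\Phi_{1}^{p}+\Phi_{2}^{p})\,dt$, and the same concluding Osgood argument via $\Psi$, $\Psi_{*}$ and the substitution $u=\Psi_{*}(x)$. The only cosmetic difference is that you add the two estimates before passing to the monotone majorant, whereas the paper combines them at the maximizing point $x_{1}$ of $\Psi$; and the bookkeeping worry you flag (that the stated lower bound on $C^{q}$ must force the prefactors below $2^{-1/p}$) is present in the paper's argument in exactly the same form.
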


\textit{Remark.} It must be pointed out that, as noted in Theorem 1.4.3 in \cite{Agarwal}, the condition that function $g(u)$ is non-decreasing can be dropped.

\section{Conclusions }
In this research, we gave some sufficient conditions for existence and uniqueness of a problem involving a nonlinear differential equations in the sense of R-L derivative when the right-hand side function has a discontinuity at zero. Considering the literature, these results can be generalized and improved. Besides, one can obtain another uniqueness results for this problem as well.

\bigskip

\end{document}